\documentclass[12pt]{amsart}
\usepackage{amssymb,latexsym}
\usepackage{amsfonts}
\usepackage{amsmath}
\usepackage[margin=3cm]{geometry}
\usepackage[colorlinks,linkcolor=blue,anchorcolor=blue,citecolor=blue]{hyperref}
\usepackage{algorithm}
\usepackage{enumerate}
\usepackage{algpseudocode}
\usepackage{verbatim}
\usepackage{graphicx}

\newcommand\Q{{\mathbb Q}}
\newcommand\Z{{\mathbb Z}}
\newcommand\cP{{\mathcal P}}
\newcommand\cQ{{\mathcal Q}}
\newcommand\ord{\mathrm{ord}}

\newtheorem{theorem}{Theorem}[section]
\newtheorem{lemma}[theorem]{Lemma}

\newtheorem{conjecture}[theorem]{Conjecture}

\newtheorem{corollary}[theorem]{Corollary}
\newtheorem{proposition}[theorem]{Proposition}

\theoremstyle{remark}

\numberwithin{equation}{section}

\begin{document}

\title[Irregular primes]{Irregular primes with respect to Genocchi numbers and 
Artin's primitive root conjecture}

\author{Su Hu}
\address{Department of Mathematics, South China University of Technology, Guangzhou 510640, China}
\email{mahusu@scut.edu.cn}

\author{Min-Soo Kim}
\address{Division of Mathematics, Science, and Computers, Kyungnam University, 7(Woryeong-dong) kyungnamdaehak-ro, Masanhappo-gu, Changwon-si,
Gyeongsangnam-do 51767, Republic of Korea}
\email{mskim@kyungnam.ac.kr}

\author{Pieter Moree}
\address{Max-Planck-Institut f\"ur Mathematik, Vivatsgasse 7, D-53111 Bonn, Germany}
\email{moree@mpim-bonn.mpg.de}

\author{Min Sha}
\address{Department of Computing, Macquarie University, Sydney, NSW 2109, Australia}
\email{shamin2010@gmail.com}

\dedicatory{Dedicated to the memory of Prof. Christopher Hooley (1928-2018)}

\subjclass[2010]{11B68, 11A07, 11R29}

\keywords{Irregular prime, Bernoulli number and polynomial, Euler number and polynomial, Genocchi number, refined class number, primitive root, Artin's primitive root conjecture}

\begin{abstract}
We introduce and study a variant of Kummer's notion of (ir)regularity of primes which we call G-(ir)regularity and is based on
Genocchi rather
than Bernoulli numbers. 
We say that an odd prime $p$ is G-irregular 
if it divides at least one of the 
Genocchi numbers $G_2,G_4,\ldots, G_{p-3}$, 
and G-regular otherwise.  
We show that, as in Kummer's case, G-irregularity is related to the 
divisibility of some class number. 
Furthermore, we obtain some results on the distribution of G-irregular primes. In particular, we show that each
primitive residue class contains infinitely many G-irregular 
primes and establish non-trivial lower bounds for
their number up to a given bound $x$ as $x$ tends
to infinity.
As a byproduct, we obtain some results on the
distribution of primes in arithmetic progressions with a prescribed near-primitive root.
\end{abstract}

\maketitle

\section{Introduction}
\label{Intro}

\subsection{The classical case}

The $n$-th Bernoulli polynomial $B_n(x)$ 
 is  implicitly defined as the coefficient of 
$t^n$ in the generating function 
\begin{equation*}
\frac{te^{xt}}{e^t-1}=\sum_{n=0}^{\infty}B_{n}(x) \frac{t^n}{n!}, 
\end{equation*}
where $e$ is the base of the natural logarithm. 
For $n\ge 0$ the Bernoulli numbers $B_n$ are defined by 
$B_{n}=B_{n}(0).$ 
It is well-known that $B_0=1$, and $B_n = 0$ for every odd integer $n > 1$. 

In this paper, $p$ always denotes a prime. 
An odd prime $p$ is said to be \textit{B-irregular}  
if $p$ divides the numerator of at least one of the Bernoulli numbers $B_{2},B_{4},\ldots,B_{p-3}$, 
and \textit{B-regular} otherwise. 
The first twenty B-irregular primes are
\begin{align*}
& 37, 59, 67, 101, 103, 131, 149, 157, 233, 257, 263,  \\ 
& 271,  283, 293, 307, 311, 347, 353, 379, 389.
\end{align*}

The notion of B-irregularity has an important application in algebraic number theory. Let $\mathbb{Q}(\zeta_{p})$ be the $p$-th cyclotomic field, 
and $h_p$ its class number. 
Denote by $h_p^{+}$ the class number of $\Q(\zeta_p+\zeta_p^{-1})$ and
put
$h_p^{-}=h_p / h_p^{+}.$
Kummer proved that $h_p^{-}$ is an integer 
 (now called the \textit{relative class number} of $\Q(\zeta_p)$) 
and gave the following characterization (see \cite[Theorem 5.16]{Wa}). 
(Here and in the sequel, for any integer $n \ge 1$, $\zeta_n$ denotes an $n$-th primitive root of unity.)  

\begin{theorem}[Kummer]
\label{thm:Kummer}
An odd prime $p$ is B-irregular if and only if $p\mid h_{p}^{-}$.
\end{theorem}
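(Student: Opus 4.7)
The plan is to reduce the divisibility $p \mid h_p^-$ to a question about generalized Bernoulli numbers $B_{1,\chi}$ via the analytic class number formula for the minus part of $\Q(\zeta_p)$, and then convert that condition into one about ordinary Bernoulli numbers through a classical Kummer congruence.

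First I would invoke the analytic class number formula in the form
\[
h_p^- = 2p \prod_{\chi \text{ odd}} \left(-\tfrac{1}{2}\, B_{1,\chi}\right),
\]
where the product runs over the $(p-1)/2$ odd Dirichlet characters $\chi$ modulo $p$, and $B_{1,\chi}$ is the generalized Bernoulli number attached to $\chi$. This identity is obtained by applying the Dirichlet class number formula to both $\Q(\zeta_p)$ and its maximal totally real subfield, taking the ratio so that the archimedean contributions and the unit regulators cancel (up to a manageable index accounted for by $w_{\Q(\zeta_p)} = 2p$), factoring the Dedekind zeta function as a product of Dirichlet $L$-functions, and using the functional-equation evaluation $L(0,\chi) = -B_{1,\chi}$ for odd $\chi$.

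Second, I would write every odd character modulo $p$ as $\chi = \omega^{2k-1}$ with $1 \le k \le (p-1)/2$, where $\omega$ is the Teichm\"uller character. The distinguished character $\omega^{-1}$ (the case $k=(p-1)/2$) must be treated separately: a direct computation from $B_{1,\chi} = \tfrac{1}{p} \sum_{a=1}^{p-1} \chi(a)\, a$ shows that $B_{1,\omega^{-1}}$ has $p$-adic valuation exactly $-1$, and this simple pole cancels precisely the $p$ in the prefactor $2p$. For the remaining odd characters, $B_{1,\omega^{2k-1}}$ is $p$-integral, and I would invoke the Kummer-type congruence
\[
B_{1,\omega^{2k-1}} \equiv \frac{B_{2k}}{2k} \pmod{p}, \qquad 1 \le k \le \tfrac{p-3}{2}.
\]
Combining, the $p$-adic valuation of $h_p^-$ equals $\sum_{k=1}^{(p-3)/2} v_p\bigl(B_{1,\omega^{2k-1}}\bigr)$. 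Hence $p \mid h_p^-$ if and only if $p \mid B_{1,\omega^{2k-1}}$ for some such $k$, equivalently (using the Kummer congruence together with $\gcd(2k,p)=1$) if and only if $p$ divides the numerator of $B_{2k}$ for some $1 \le k \le (p-3)/2$, which is precisely the definition of B-irregularity.

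The principal obstacle is the class number formula itself, whose derivation bundles together the Dirichlet class number formula, a delicate comparison of class numbers, regulators and torsion between $\Q(\zeta_p)$ and its maximal real subfield, and the factorization of the Dedekind zeta function into Dirichlet $L$-series. The Kummer congruence and the pole analysis of $B_{1,\omega^{-1}}$ are by contrast comparatively routine, following either from the explicit formula for $B_{1,\chi}$ together with standard Bernoulli-polynomial manipulations, or more conceptually from the interpolation property of the Kubota--Leopoldt $p$-adic $L$-function.
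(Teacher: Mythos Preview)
The paper does not give its own proof of this classical theorem; it simply cites Washington \cite[Theorem 5.16]{Wa}. Your proposal is precisely the standard argument found there: the relative class number formula $h_p^- = 2p\prod_{\chi\text{ odd}}(-\tfrac12 B_{1,\chi})$, the parametrization of odd characters by powers of the Teichm\"uller character, the isolation of the single non-integral factor $B_{1,\omega^{-1}}$ whose pole cancels the prefactor $p$, and the congruence $B_{1,\omega^{2k-1}}\equiv B_{2k}/(2k)\pmod p$ (which is \cite[Corollary~5.15]{Wa}). So your approach coincides with the one the paper points to, and the sketch is correct.
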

Kummer showed that if $p\nmid h_p^{-},$ then
the Diophantine equation $x^p+y^p=z^p$ does not have an integer solution
$x,y,z$ with $p$ coprime to $xyz,$ cf. \cite[Chapter 1]{Wa}.

We remark that for any odd prime $p$, $p\mid h_{p}^{-}$ if and only if $p\mid h_{p}$; see \cite[Theorem 5.34]{Wa}. The class number $h_{p}^{+}$ is rather more
elusive than $h_{p}^{-}$. Here Vandiver 
made the conjecture that $p\nmid h_{p}^{+}$ for any odd prime $p$.  

Jensen \cite{Jensen} was the 
first to prove that there are infinitely many B-irregular primes.
More precisely, he showed that there are infinitely many B-irregular primes
not of the form $4n + 1$. 
This was generalized by Montgomery \cite{Mon}, 
who showed that  4 can be replaced by any integer greater than 2. 
To the best of our knowledge, 
the following result due to Mets\"ankyl\"a \cite{Met2} has not been 
improved upon. 

\begin{theorem}[Mets\"ankyl\"a \cite{Met2}]
\label{thm:Met}
Given an integer $m>2$, let $\Z_m^*$ be the multiplicative group of the residue classes modulo $m$, and let $H$ be a proper subgroup of $\Z_m^*$. 
Then, there exist infinitely many B-irregular primes not lying in the residue classes in $H$. 
\end{theorem}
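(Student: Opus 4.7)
My approach would follow the classical Jensen--Carlitz strategy, twisted by a character that detects membership in $H$. Since $H$ is a proper subgroup of $\mathbb{Z}_m^*$, character theory for the finite abelian group $\mathbb{Z}_m^*/H$ produces a non-trivial Dirichlet character $\chi$ modulo $m$ with $\chi|_H = 1$; after multiplying $\chi$ by a quadratic character if necessary, I arrange $\chi(-1)=1$ so that the generalised Bernoulli numbers $B_{2k,\chi}$ are non-zero for every $k\geq 1$. For a prime $p \nmid m$, $p$ lies in a residue class in $H$ if and only if $\chi(p)=1$, so the task reduces to producing infinitely many B-irregular primes $p$ with $\chi(p)\neq 1$.

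The analytic engine is the super-exponential growth of $|B_{2k,\chi}|$. From $B_{2k,\chi}=-2k\,L(1-2k,\chi)$ and the functional equation of $L(s,\chi)$, one obtains $|B_{2k,\chi}|\asymp (2k)!/(2\pi f)^{2k}$, where $f$ denotes the conductor of $\chi$. A von~Staudt--Clausen-type result of Carlitz bounds the denominator of $B_{2k,\chi}$ by a polynomial expression in $2k$ involving only primes dividing $mf$ and primes $p$ with $(p-1)\mid 2k$. Hence, for $k$ sufficiently large, the numerator of $B_{2k,\chi}$ admits a prime factor $p>2k+1$, which automatically satisfies $2k\leq p-3$, one of the two ingredients of B-irregularity.

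To close the argument I would show that such a prime $p$ is both B-irregular and satisfies $\chi(p)\neq 1$. Expanding $B_{2k,\chi} = f^{2k-1}\sum_{a=1}^{f}\chi(a)\, B_{2k}(a/f)$ and using standard distribution identities links the numerator of $B_{2k,\chi}$ to the numerators of ordinary Bernoulli numbers $B_{2j}$ with $2\leq 2j\leq p-3$, establishing B-irregularity. For the residue constraint, one argues by contradiction: if only finitely many B-irregular primes satisfied $\chi(p)\neq 1$, then for every sufficiently large $k$ all large prime factors of the numerator of $B_{2k,\chi}$ would obey $\chi(p)=1$, which is a strong arithmetic constraint that must ultimately conflict with the super-exponential growth above.

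The principal obstacle is making this final contradiction precise, since the mere existence of prime divisors $p$ with $\chi(p)=1$ is not in itself forbidden --- one has to rule out that such primes could account for the entire large prime content of the numerator. Following Mets\"ankyl\"a, this is handled $p$-adically via Kummer-type congruences for $B_{2k,\chi}$, which tie its $p$-adic valuation to that of smaller generalised Bernoulli numbers and of ordinary $B_{2j}$'s; varying $k$ through an appropriate residue class modulo $p-1$ then forces the prescribed $p$-adic behaviour to fail, and so produces infinitely many B-irregular primes with $\chi(p)\neq 1$, that is, lying outside $H$.
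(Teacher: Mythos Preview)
The paper contains no proof of this theorem. Theorem~\ref{thm:Met} is quoted from Mets\"ankyl\"a \cite{Met2} purely as background on the classical Bernoulli case; the paper's own proofs (Section~4) cover only Theorems~\ref{thm:class} and~\ref{thm:GB} and Proposition~\ref{prop:main}, all of which concern the Genocchi-number analogue. There is therefore nothing in the paper to compare your proposal against.

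For the record, your sketch is in the spirit of Mets\"ankyl\"a's original argument, which likewise exploits the growth of generalised Bernoulli numbers $B_{n,\chi}$ for a non-principal character $\chi$ trivial on $H$. Two places in your outline would need tightening in a full write-up. First, forcing $\chi(-1)=1$ by ``multiplying by a quadratic character'' need not preserve $\chi|_H=1$; in particular, when $[\Z_m^*:H]=2$ and $-1\notin H$ the only non-trivial character trivial on $H$ is odd, and that case must be handled separately. Second, the expansion $B_{2k,\chi}=f^{2k-1}\sum_{a=1}^{f}\chi(a)B_{2k}(a/f)$ involves only the single polynomial $B_{2k}(\cdot)$, so it does not by itself link $p\mid B_{2k,\chi}$ to divisibility of ordinary $B_{2j}$ for smaller even $j$; that link comes, as you correctly signal at the end, from the generalised Kummer congruences, and that is where the real content of the proof lies --- a point you yourself flag as the principal obstacle.
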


Let ${\mathcal P}_B$ be the set of B-irregular primes.
Carlitz \cite{Carlitz} gave a simple proof of 
the infinitude of this set, 
and recently Luca, Pizarro-Madariaga and Pomerance \cite[Theorem 1]{Luca} made this more quantitative by showing that 
\begin{equation}
\label{eq:Luca}
{\mathcal P}_B(x)  \ge (1+o(1)) \frac{\log\log x}{\log\log\log x}
\end{equation}
as $x \to \infty$. (Here and in the sequel, if
$S$ is a set of natural numbers, then $S(x)$ denotes
the number of elements in $S$ not exceeding $x.$)
Heuristical arguments suggest a much stronger 
result (consistent with numerical data; see, for instance, \cite{BH,HHO}).

\begin{conjecture}[Siegel \cite{Siegel}]   \label{conj:Siegel1}
Asymptotically we have
 $$
 \cP_B(x)\sim \left( 1-\frac{1}{\sqrt{e}}\right) \pi(x),
 $$
 where $\pi(x)$ denotes the prime counting function. 
 \end{conjecture}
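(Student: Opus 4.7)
Since Siegel's conjecture is a well-known open problem, the best I can realistically offer is a heuristic derivation together with a description of what partial results seem within reach. The plan is as follows.

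First I would note that an odd prime $p$ is B-regular precisely when $p$ divides none of the numerators of the $(p-3)/2$ Bernoulli numbers $B_2, B_4, \ldots, B_{p-3}$. The guiding heuristic is that, for $1 \le k \le (p-3)/2$, the residues $B_{2k} \pmod p$ are equidistributed in $\Z/p\Z$ and that the events $p \mid B_{2k}$ are asymptotically independent as $k$ varies. Under these hypotheses the probability that $p$ is B-regular is
\[
\left(1 - \frac{1}{p}\right)^{(p-3)/2} = \exp\!\left(-\tfrac{1}{2} + O(1/p)\right) \longrightarrow \frac{1}{\sqrt{e}},
\]
and summing over primes $p \le x$ via the prime number theorem yields $\cP_B(x) \sim (1 - 1/\sqrt{e})\pi(x)$.

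To attach any rigorous meaning to the heuristic, I would recast $B_{2k} \bmod p$ in terms of values of Kubota--Leopoldt $p$-adic $L$-functions at even characters, via
\[
L_p(1 - 2k, \omega^{2k}) = -(1 - p^{2k-1}) \frac{B_{2k}}{2k}.
\]
The needed input then becomes an equidistribution statement for these $L$-values modulo $p$, for which no current technique is available. As a fallback I would try to strengthen the elementary bound \eqref{eq:Luca}, possibly by coupling the Kummer correspondence (Theorem \ref{thm:Kummer}) with density-type information on the $p$-part of $\mathrm{Cl}(\Q(\zeta_p))$, or by recasting the Carlitz argument in a sieve-theoretic framework where one tracks more than one irregularity index at a time.

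The main obstacle is the complete absence of any mechanism that forces, for a positive proportion of primes $p$, the existence of an index $2k \le p-3$ with $p \mid B_{2k}$: the best known lower bound \eqref{eq:Luca} is only doubly logarithmic in $x$, and even pushing it to \emph{any} positive density would already be a landmark result, let alone identifying the precise constant $1 - 1/\sqrt{e}$. A realistic concrete deliverable would therefore be (i) a clean equidistribution conjecture for $\{B_{2k} \bmod p : 1 \le k \le (p-3)/2\}$ that provably implies Siegel's conjecture, and (ii) supporting numerical evidence along the lines of Buhler--Harvey.
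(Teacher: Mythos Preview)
Your proposal is correct in recognizing that this is an open conjecture with no proof in the paper, and your heuristic derivation---modeling the residues $B_{2k}\bmod p$ as uniform and independent so that the probability of B-regularity is $(1-1/p)^{(p-3)/2}\to e^{-1/2}$---is exactly the reasoning the paper gives. Your additional remarks on $p$-adic $L$-functions and possible sieve refinements go beyond what the paper offers, but they are sound commentary on why the conjecture resists proof; the paper itself does not attempt anything rigorous here.
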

 
The reasoning behind this conjecture is as follows.
We assume that the numerator of $B_{2k}$ is not divisible
by $p$ with probability $1-1/p.$ Therefore, assuming the 
independence of divisibility by distinct primes, we expect
that $p$ is B-regular with probability
$$
\left( 1-\frac{1}{p}\right)^{\frac{p-3}{2}},
$$
which with increasing $p$ tends to $e^{-1/2}$. 

For any positive integers $a,d$ with $\gcd(a,d)=1$, let $\cP_B(d,a)$ 
be the set of B-irregular primes congruent to $a$ modulo $d$. 
We pose the following conjecture, 
which suggests that B-irregular primes are uniformly distributed in arithmetic progressions. 
It is consistent with numerical data; see Table \ref{tab:B-irre}. 

\begin{conjecture}  \label{conj:Siegel2}
For any positive integers $a,d$ with $\gcd(a,d)=1$, asymptotically we have
 $$
 \cP_B(d,a)(x)\sim \frac{1}{\varphi(d)} \left(1-\frac{1}{\sqrt{e}}\right) \pi(x), 
 $$
 where $\varphi$ denotes Euler's totient function. 
\end{conjecture}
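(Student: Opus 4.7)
The plan is to mirror the heuristic Siegel gave for Conjecture \ref{conj:Siegel1}, refined so as to detect the congruence class of $p$ modulo the fixed modulus $d$. For a prime $p$, model the residues $B_{2k}\bmod p$ for $1\le k\le (p-3)/2$ as independent random variables, each uniform on $\Z/p\Z$. Then ``$p\nmid B_{2k}$'' has probability $1-1/p$, so
\[
\Pr[p\text{ is B-regular}]\approx \left(1-\frac{1}{p}\right)^{(p-3)/2}\longrightarrow e^{-1/2},
\]
recovering the overall density $1-1/\sqrt{e}$. To localise to a residue class, I would layer on a second heuristic: the joint distribution of $(B_2,B_4,\ldots,B_{p-3})\bmod p$ is asymptotically independent of $p\bmod d$. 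This is very plausible, because $B_{2k}$ is a universal rational number whose $p$-adic valuation is intrinsic to $p$ and oblivious to an auxiliary modulus $d$ that is coprime to $p$.

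Granting both heuristics, the proposed asymptotic would follow by summation. Among the $\pi(x;d,a)\sim \pi(x)/\varphi(d)$ primes $p\equiv a\pmod d$ with $p\le x$, a proportion tending to $1-e^{-1/2}$ are B-irregular, and the prime number theorem in arithmetic progressions (Siegel--Walfisz is enough, since $d$ is fixed) then yields the stated formula. The argument would be genuinely a two-dimensional density computation: one factor $1/\varphi(d)$ comes from PNT in AP, and the independent factor $1-1/\sqrt{e}$ comes from the B-irregularity density. Showing that the error terms in the two independent asymptotics may be multiplied requires some uniformity, but this is a technicality once the heuristics are in place.

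The genuine obstacle is that none of the above is currently within reach. Even the unconditional Conjecture \ref{conj:Siegel1} has only the extremely weak lower bound \eqref{eq:Luca} due to Luca--Pizarro-Madariaga--Pomerance; matching upper bounds are unknown, as is any control on the constant $1-1/\sqrt{e}$. The difficulty is that each $B_{2k}$ is a single rational number whose numerator can be described globally, but whose $p$-adic behaviour as $p$ varies is genuinely arithmetic, and we have no mechanism to establish the required independence across $k$, let alone its uniformity over $p\bmod d$. A possible route worth exploring is Kummer's Theorem \ref{thm:Kummer}, which translates B-irregularity into divisibility of the relative class number $h_p^-$: one would seek an analytic equidistribution statement for the $p$-parts of $h_p^-$ as $p$ ranges over an arithmetic progression, strong enough to isolate the constant $1-e^{-1/2}$. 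Present technology (the analytic class number formula combined with averaging over Dirichlet characters modulo $p$, and Mets\"ankyl\"a's character-sum method behind Theorem \ref{thm:Met}) does not suffice, so the proposal is a programme rather than a complete proof: legitimise the two heuristic inputs, then combine with PNT in AP.
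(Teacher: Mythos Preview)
The statement you are addressing is a \emph{conjecture}, not a theorem: the paper offers no proof of it, only the remark that it is suggested by the expectation that B-irregular primes are uniformly distributed in arithmetic progressions, together with numerical evidence in Table~\ref{tab:B-irre}. So there is no ``paper's proof'' to compare against.

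Your write-up is honest about this. The heuristic you present---Siegel's independence model for $B_{2k}\bmod p$, layered with the assumption that B-irregularity is asymptotically independent of the residue class $p\bmod d$, then combined with the prime number theorem in arithmetic progressions---is exactly the reasoning behind the conjecture, and you correctly flag that neither heuristic input can currently be made rigorous. Your closing assessment (``a programme rather than a complete proof'') is accurate; there is nothing to correct, only to reiterate that what you have written is motivation for the conjecture, not a proof attempt in the usual sense.
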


Although we know that there are infinitely many 
 B-irregular primes, it is still not known whether
 there are infinitely many B-regular primes!

\subsection{Some generalizations}

In \cite{Carlitz} Carlitz studied (ir)regularity with respect to Euler numbers. 

The Euler numbers $E_n$ are a sequence  of integers defined by the relation 
$$
\frac{2}{e^t+e^{-t}} = \sum_{n=0}^{\infty} E_n \frac{t^n}{n!}. 
$$
It is easy to see
that $E_0=1$, and $E_n=0$ for any odd $n\ge 1$. 
Euler numbers, just as Bernoulli numbers, can be defined via special polynomial values.  
The Euler polynomials $E_{n}(x)$ are implicitly defined as the coefficient of 
$t^n$ in the generating function
\begin{equation*}  \label{Eu-pol}
\frac{2e^{xt}}{e^t+1}=\sum_{n=0}^\infty E_n(x)\frac{t^n}{n!}.
\end{equation*}
Then, $E_n = 2^n E_n(1/2), n=0,1,2,\ldots.$  

An odd prime $p$ is said to be \textit{E-irregular} 
if it divides at least one of the integers  $E_{2},E_{4},\ldots,E_{p-3}$, 
and \textit{E-regular} otherwise. 
The first twenty E-irregular primes are 
\begin{align*}
& 19, 31, 43, 47, 61, 67, 71, 79, 101, 137, 139, \\
& 149, 193, 223, 241, 251, 263, 277, 307, 311.
\end{align*}
Vandiver \cite{Van} proved that if $p$ is E-regular, then 
the Diophantine equation $x^p+y^p=z^p$ does not have an integer solution
$x,y,z$ with $p$ coprime to $xyz.$ 
This criterion makes it possible to discard 
many of the exponents Kummer could not handle:
$$
37, 59, 103, 131, 157, 233, 257, 271, 283, 293,\ldots. 
$$

Carlitz \cite{Carlitz} showed that there are infinitely many E-irregular primes. 
Luca, Pizarro-Madariaga and Pomerance in \cite[Theorem 2]{Luca} 
showed that the number of  E-irregular primes up to
$x$ satisfies the same lower bound as in \eqref{eq:Luca}. 
Regarding their distribution, we currently only know that there are infinitely many E-irregular primes 
not lying in the residue classes $\pm 1~({\rm mod~}8)$, which was proven by Ernvall \cite{Ern}.  

Like the B-(ir)regularity of primes, their E-(ir)regularity also relates to the
divisibility of class numbers of cyclotomic fields; see \cite{Ern2,Ern3}. 

Let $\cP_E$ be the set of E-irregular primes, and let $\cP_E(d,a)$ 
be the set of E-irregular primes congruent to $a$ modulo $d$.  
It was conjectured and tested in \cite[Section 2]{EM} that:  

\begin{conjecture}  \label{conj:E-irre1}
Asymptotically we have
 $$
 \cP_E(x)\sim \left(1-\frac{1}{\sqrt{e}}\right)\pi(x). 
 $$
 \end{conjecture}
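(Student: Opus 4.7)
The plan is to follow the heuristic template Siegel used for Conjecture \ref{conj:Siegel1}. Each $E_{2k}$ is a nonzero integer (the even-index Euler numbers grow roughly like $(2k)!/\pi^{2k}$ in absolute value, hence never vanish), and the natural probabilistic model is that a given odd prime $p$ divides $E_{2k}$ with probability $1/p$. Assuming that these divisibility events are independent as $k$ ranges over $1,2,\ldots,(p-3)/2$, the prime $p$ should be E-regular with heuristic probability
$$\left(1 - \frac{1}{p}\right)^{(p-3)/2}\longrightarrow e^{-1/2}\qquad(p\to\infty).$$
Summing $1-e^{-1/2}+o(1)$ over primes $p\le x$ and applying the prime number theorem then produces the claimed asymptotic $\cP_E(x)\sim(1-1/\sqrt{e})\pi(x)$.

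To upgrade this heuristic to a theorem one must control two sources of error: the discrepancy between the true frequency of $p\mid E_{2k}$ and the idealized value $1/p$, and the correlations between the various divisibility events as $k$ varies. Both problems appear to lie well beyond current techniques. What I would instead pursue rigorously, as stepping stones, are the weaker unconditional lower bound $\cP_E(x)\ge(1+o(1))\log\log x/\log\log\log x$ (already established in \cite{Luca}) and an E-irregular analogue of Theorem \ref{thm:Met} excluding any prescribed proper subgroup $H$ of $\Z_m^*$; the latter should be accessible by transferring the character-sum techniques underlying Theorem \ref{thm:Met} via the relation of Euler numbers with special values of Dirichlet $L$-functions, as Ernvall's result \cite{Ern} for the $\pm 1\pmod 8$ case already suggests.

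The hard part will be the independence hypothesis. A positive-proportion result would require a statistical statement about Euler numbers modulo primes that currently seems inaccessible: it is telling that we do not even know whether there are infinitely many E-\emph{regular} primes, exactly the open problem that persists in the Bernoulli case. A full proof of Conjecture \ref{conj:E-irre1} therefore appears to demand a genuinely new input, and any realistic attempt in the present framework must for now content itself with heuristics of the above type backed up by numerical verification, as in \cite{EM}.
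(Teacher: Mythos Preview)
The statement in question is a \emph{conjecture}, not a theorem, and the paper offers no proof of it; it merely records that the asymptotic was conjectured and numerically tested in \cite{EM}. Your proposal correctly recognizes this: you give the standard Siegel-type heuristic (independent divisibility events each with probability $1/p$, leading to a limiting E-regular probability of $e^{-1/2}$) and then explicitly concede that turning the heuristic into a theorem is beyond current techniques. That is an accurate assessment of the situation, and the heuristic you present is the same one the paper spells out for the Bernoulli analogue (Conjecture~\ref{conj:Siegel1}) and implicitly relies on here.

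One small caveat: your middle paragraph suggests as a ``stepping stone'' proving an E-irregular analogue of Theorem~\ref{thm:Met} for an arbitrary proper subgroup $H$ of $\Z_m^*$. Be aware that this is not established in the literature; Ernvall's result \cite{Ern} handles only the specific case $m=8$, $H=\{\pm 1\}$, and the general case remains open. So while your instinct to look for partial results is sound, that particular target is itself an open problem rather than a routine transfer of known techniques.
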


As in the case of B-irregular primes, we pose the following conjecture. 

\begin{conjecture}  \label{conj:E-irre2}
 For any positive integers $a,d$ with $\gcd(a,d)=1$, asymptotically we have
 $$
 \cP_E(d,a)(x)\sim \frac{1}{\varphi(d)} \left(1-\frac{1}{\sqrt{e}}\right)\pi(x). 
 $$
 \end{conjecture}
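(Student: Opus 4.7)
Since the statement is a conjecture and even the weaker Conjecture \ref{conj:E-irre1} is wide open, a rigorous proof is out of reach with present techniques. The plan below therefore describes the \emph{heuristic} I would make precise, together with what would need to be shown at each stage.

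First, I would reproduce the probabilistic heuristic that justifies Conjecture \ref{conj:E-irre1} in the spirit of Siegel's argument for Conjecture \ref{conj:Siegel1}. For each index $2k$ with $1\le k\le (p-3)/2$, one models the event $p\mid E_{2k}$ as a Bernoulli trial of success probability $1/p$. Assuming independence across $k$, the probability that $p$ is E-regular is
\[
\left(1-\frac{1}{p}\right)^{(p-3)/2}\longrightarrow e^{-1/2},
\]
which gives the conjectured natural density $1-e^{-1/2}$ for $\cP_E$.

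Next, I would add the key heuristic hypothesis needed for the refinement: that the $(p-3)/2$ divisibility events $p\mid E_{2k}$ are statistically independent of the residue class $p\pmod d$. Granting this, Dirichlet's theorem on primes in arithmetic progressions, combined with the previous step applied only to primes $p\equiv a\pmod d$, yields
\[
\cP_E(d,a)(x)\sim \frac{1}{\varphi(d)}\left(1-\frac{1}{\sqrt e}\right)\pi(x).
\]
Numerical evidence (of the kind referenced in \cite{EM}) can be gathered to support this independence.

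The main obstacle is precisely the independence assumption in the second step, and more broadly the fact that we lack any systematic equidistribution result for the vanishings $E_{2k}\equiv 0\pmod p$. In Kummer's setting this would go through Herbrand--Ribet and the structure of the minus part of the class group; for Euler numbers one would want an analogous input via $p$-adic $L$-functions attached to the character encoding $E_{2k}$ (as used in \cite{Ern2,Ern3} to relate E-irregularity to class numbers), together with Chebotarev-style information over the splitting field of those $L$-values. As a much more modest intermediate target, I would first try to transplant Mets\"ankyl\"a's proof of Theorem \ref{thm:Met} to the Euler setting, showing that for every proper subgroup $H\le \Z_m^*$ there are infinitely many E-irregular primes outside $H$; Ernvall's result \cite{Ern} handles the case $H=\{\pm 1\}\pmod 8$ and suggests this is already nontrivial but tractable. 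Even this falls far short of the quantitative conjecture, and a full asymptotic seems to require genuinely new ideas, given that the infinitude of E-\emph{regular} primes is itself unknown.
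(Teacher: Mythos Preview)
Your assessment is correct: the statement is a conjecture, and the paper does not prove it either. The paper simply poses it by analogy with Conjecture~\ref{conj:Siegel2} and supports it with the numerical data in Table~\ref{tab:E-irre}; your heuristic via Siegel-type independence plus equidistribution in residue classes is exactly the informal reasoning the paper has in mind, though the paper does not spell it out for the Euler case.
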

\noindent This conjecture is consistent with computer 
calculations; 
see Table \ref{tab:E-irre}.  

Later on, Ernvall \cite{Ern2,Ern3} introduced $\chi$-irregular primes 
and proved the infinitude of such irregular primes for any Dirichlet character $\chi$, 
including B-irregular primes and E-irregular primes as special cases.  
In addition, Hao and Parry \cite{HP} defined $m$-regular primes for any square-free integer $m$, 
and Holden \cite{Holden} defined irregular primes by using the values of zeta functions of 
totally real number fields. 

In this paper, we 
introduce a new kind of irregular prime based on Genocchi 
numbers and study 
their distribution in 
detail.

\subsection{Regularity with respect to Genocchi numbers}
\label{sec:Euler}

The Genocchi numbers $G_n$ are defined by the relation  
$$
\frac{2t}{e^t+1} = \sum_{n=1}^{\infty} G_n \frac{t^n}{n!}. 
$$
It is well-known that $G_1=1$, $G_{2n+1}=0$ for $n \ge 1$, and $(-1)^nG_{2n}$ is an odd positive integer.  
The Genocchi numbers $G_n$ are related to Bernoulli numbers $B_n$ by the formula 
\begin{equation}  \label{eq:BnGn}
G_n = 2(1-2^n)B_n. 
\end{equation}
In view of the definitions of $G_n$ and $E_n(x)$, we directly obtain
\begin{equation}  \label{eq:EnGn}
G_n = nE_{n-1}(0), \quad n \ge 1.  
\end{equation}
 
In analogy with Kummer and Carlitz, we here define an odd prime $p$ to be \textit{G-irregular} 
if it divides at least one of the integers $G_2,G_4,\ldots, G_{p-3}$, and
\textit{G-regular} if it does not.
The first twenty G-irregular primes are  
\begin{align*}
& 17, 31, 37, 41, 43, 59, 67, 73, 89, 97, 101, 103, \\
& 109, 113, 127, 131, 137, 149, 151, 157. 
\end{align*}
Clearly, if an odd prime $p$ is B-irregular, then it is also G-irregular. 

Recall that a \textit{Wieferich prime} is an odd prime $p$ such that 
$2^{p-1}\equiv 1~({\rm mod~}p^2)$, which arose in the study of Fermat's Last Theorem. 
So, if an odd prime $p$ is a Wieferich prime, then $p$ divides $G_{p-1}$, 
and otherwise it does not divide $G_{p-1}$.  
Currently there are only two Wieferich prime known, namely 
$1093$ and $3511$.
If there are further ones, they are larger than $6.7 \times 10^{15}$; see \cite{DK}.  
Both $1093$ and $3511$ are G-irregular primes. 
However, $1093$ is B-regular, and $3511$ is B-irregular. 

As in the classical case, the G-irregularity of primes can 
be linked to the divisibility of some class numbers of cyclotomic fields.  
Let $S$ be the set of infinite places of $\Q(\zeta_p)$ and $T$ the set of places above the prime 2. 
Denote by $h_{p,2}$ the \textit{$(S,T)$-refined class number} of $\Q(\zeta_p)$. 
Similarly, let $h_{p,2}^{+}$ be the refined class number of $\Q(\zeta_p+\zeta_p^{-1})$ with respect to its infinite places and places above the prime 2
(for the definition of the refined class number of global fields,
we refer to Gross \cite[Section 1]{Gross} or Hu and Kim \cite[Section 2]{HK}). 
Define 
$$
h_{p,2}^{-} = h_{p,2} / h_{p,2}^{+}.
$$
It turns out that $h_{p,2}^{-}$ is an integer
(see \cite[Proof of Proposition 3.4]{HK}).

\begin{theorem}
\label{thm:class}
 Let $p$ be an odd prime. 
 Then, if $p$ is G-irregular, we have $p\mid h_{p,2}^{-}$. 
 If furthermore  $p$ is not a Wieferich prime, the converse is also true. 
\end{theorem}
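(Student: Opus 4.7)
My plan is to deduce Theorem \ref{thm:class} from an analog of Kummer's class number formula for $h_{p,2}^{-}$, combined with Kummer's congruences for generalized Bernoulli numbers.

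The starting point is to use the Gross $(S,T)$-refined class number formalism \cite{Gross}, applied to $K=\Q(\zeta_p)$ with $S$ the archimedean places and $T$ the places above $2$ (together with its minus-part under complex conjugation), to extract from the Hu--Kim framework \cite{HK} an explicit analytic formula expressing $h_{p,2}^{-}$ as a product over the odd Dirichlet characters modulo $p$ of modified $L$-values at $s=0$. Each such factor should be, up to a $p$-adic unit, a generalized Bernoulli number $B_{1,\chi}$ multiplied by an Euler-type correction coming from the primes above $2$. Parameterizing the odd characters as $\chi=\omega^{n-1}$ with $\omega$ the Teichm\"uller character and $n$ even in $\{2,4,\ldots,p-1\}$, and using $\omega(2)\equiv 2\pmod p$ together with the identity $G_n=2(1-2^n)B_n$ from \eqref{eq:BnGn}, I aim to identify this product, up to a $p$-adic unit $u_p\in\Z_p^{\times}$, with
$$
h_{p,2}^{-} \;=\; u_p \prod_{\substack{2\le n\le p-1 \\ n\text{ even}}} \frac{G_n}{n}.
$$

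Given such a factorization, the forward direction is immediate. Kummer's congruence $B_{1,\omega^{n-1}}\equiv B_n/n\pmod p$, valid for even $n$ in $[2,p-3]$, implies that the factor indexed by such $n$ is a $p$-adic unit iff $p\nmid G_n$. So if $p$ is G-irregular then at least one factor in the product is divisible by $p$, forcing $p\mid h_{p,2}^{-}$.

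For the converse, the boundary factor at $n=p-1$ (corresponding to $\chi=\omega^{-1}$, where Kummer's congruence fails) needs separate treatment. Up to a $p$-adic unit this factor equals
$$
\frac{G_{p-1}}{p-1} \;=\; \frac{2(1-2^{p-1})B_{p-1}}{p-1}.
$$
By the von Staudt--Clausen theorem, $v_p(B_{p-1})=-1$, and by Fermat's little theorem, $v_p(1-2^{p-1})\ge 1$, with equality precisely when $p$ is \emph{not} a Wieferich prime. Thus the boundary factor has $p$-adic valuation $v_p(1-2^{p-1})-1$, which vanishes in exactly the non-Wieferich case. Consequently, under the hypothesis that $p$ is not Wieferich, any divisibility $p\mid h_{p,2}^{-}$ must originate from an index $n\in\{2,4,\ldots,p-3\}$, yielding $p\mid G_n$ and hence the G-irregularity of $p$.

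The main obstacle is pinning down the precise form of the analytic formula in the first step. The delicate point is verifying that the Euler-type correction at the primes above $2$ combines with $B_{1,\omega^{n-1}}$ to produce the factor $(1-2^n)$ modulo $p$, rather than a superficially similar expression like $(1-2^{n-1})$: it is precisely this feature that couples $h_{p,2}^{-}$ to \emph{Genocchi} numbers through $G_n=2(1-2^n)B_n$ rather than merely to Bernoulli numbers. Once the correct formula is secured from the Hu--Kim formalism, the Kummer-congruence comparison and the boundary analysis at $n=p-1$ together yield the theorem.
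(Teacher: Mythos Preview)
Your plan is sound and would succeed, and the boundary analysis at $n=p-1$ matches the paper's exactly. The paper, however, takes a parallel but more streamlined route. Rather than expressing the product over odd characters via generalized Bernoulli numbers $B_{1,\chi}$ with an Euler correction at $2$ and then invoking the classical Kummer congruence $B_{1,\omega^{n-1}}\equiv B_n/n\pmod p$ and the identity $G_n=2(1-2^n)B_n$, the paper quotes Hu--Kim \cite[Proposition~3.4]{HK} directly in \emph{generalized Euler number} form,
\[
h_{p,2}^{-}=(-1)^{(p-1)/2}\,2^{2-p}\,E_{0,\omega_p}\,E_{0,\omega_p^{3}}\cdots E_{0,\omega_p^{p-2}},
\]
and then applies the Euler-number analogue of Kummer's congruence proved in Lemma~\ref{p4}, namely $E_{0,\omega_p^{n}}\equiv E_n(0)\pmod p$, together with $G_n=nE_{n-1}(0)$. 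This yields the congruence
\[
h_{p,2}^{-}\equiv \frac{(-1)^{(p-1)/2}\,2^{2-p}}{(p-1)!!}\,G_2G_4\cdots G_{p-3}G_{p-1}\pmod p
\]
in one step, bypassing the explicit computation of the $(1-2^n)$ correction that you flag as the delicate point. In effect, the ``obstacle'' you identify is already dissolved by the Euler-number formulation of \cite{HK}: the twist by the place above $2$ is built into the generalized Euler numbers $E_{0,\chi}$, so no separate Euler factor needs to be tracked. Your Bernoulli route and the paper's Euler route are two faces of the same identity (related through $G_n=2(1-2^n)B_n=nE_{n-1}(0)$), but the paper's choice avoids the bookkeeping you anticipated. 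One small caveat: what you actually obtain is a congruence modulo $p$, not an exact equality up to a unit in $\Z_p^{\times}$; this is all that is needed, and your argument goes through unchanged with that adjustment.
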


\subsubsection{Global distribution of G-irregular primes}

Let $g$ be a non-zero integer. For an odd prime $p\nmid g$,  let ${\rm ord}_p(g)$ be the multiplicative
order of $g$ modulo $p$, that is, the smallest positive integer 
$k$ such that $g^{k}\equiv 1~({\rm mod~}p)$.

\begin{theorem}  
\label{thm:GB}
An odd prime $p$ is G-regular if and only 
if it is B-regular and satisfies $\ord_p(4)=(p-1)/2$.  
\end{theorem}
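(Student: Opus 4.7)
The plan is to exploit the explicit identity (\ref{eq:BnGn}) relating Genocchi and Bernoulli numbers, which immediately reduces the $p$-divisibility of $G_{2k}$ to a combination of the $p$-divisibility of $B_{2k}$ and that of $1 - 4^k$. First I would fix an odd prime $p$ and an index $k$ with $1 \le k \le (p-3)/2$, and write
\begin{equation*}
G_{2k} = 2\bigl(1 - 2^{2k}\bigr) B_{2k} = 2\bigl(1 - 4^{k}\bigr) B_{2k}.
\end{equation*}
Since $p$ is odd, $p \mid G_{2k}$ if and only if $p$ divides the numerator of $B_{2k}$ or $p \mid 1 - 4^{k}$. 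Thus $p$ is G-irregular precisely when at least one of the following holds: (a) $p$ is B-irregular, or (b) there exists $k$ with $1 \le k \le (p-3)/2$ and $4^{k} \equiv 1 \pmod{p}$.

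Next I would translate condition (b) into a statement purely about $\ord_p(4)$. Clearly (b) holds if and only if $\ord_p(4) \le (p-3)/2$, because the smallest positive multiple of $\ord_p(4)$ is $\ord_p(4)$ itself. Here I would invoke the basic fact that $4$ is a quadratic residue modulo every odd prime $p$, so by Euler's criterion $4^{(p-1)/2} \equiv 1 \pmod{p}$ and hence $\ord_p(4) \mid (p-1)/2$. In particular $\ord_p(4) \le (p-1)/2$, with equality excluded by (b) since $(p-3)/2 = (p-1)/2 - 1$. Because $\ord_p(4)$ divides $(p-1)/2$, the inequality $\ord_p(4) \le (p-3)/2$ is equivalent to the strict inequality $\ord_p(4) < (p-1)/2$, and hence to $\ord_p(4) \ne (p-1)/2$.

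Combining the two reductions, $p$ is G-irregular if and only if $p$ is B-irregular or $\ord_p(4) \ne (p-1)/2$; taking contrapositives yields the statement of the theorem. To be safe I would also mention the boundary case $p = 3$, where the defining ranges $G_2, \ldots, G_{p-3}$ and $B_2, \ldots, B_{p-3}$ are empty (so $p$ is vacuously G-regular and B-regular) and $\ord_3(4) = 1 = (3-1)/2$, so the equivalence holds trivially. There is no genuine obstacle in this argument; the only subtle point is remembering that $4$, being a square modulo $p$, automatically has order dividing $(p-1)/2$, which is what pins the condition down to the clean equality $\ord_p(4) = (p-1)/2$ rather than a more complicated divisibility statement.
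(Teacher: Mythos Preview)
Your argument follows the same route as the paper's: both use identity (\ref{eq:BnGn}) to write $G_{2k}=2(1-4^k)B_{2k}$ and then split the $p$-divisibility of $G_{2k}$ into the two factors, translating the $(1-4^k)$ part into the order condition on $4$ exactly as you do.

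The only step you gloss over, and which the paper makes explicit, is the justification of the equivalence ``$p\mid G_{2k}$ if and only if $p$ divides the numerator of $B_{2k}$ or $p\mid 1-4^k$''. Since $B_{2k}$ is merely rational, this biconditional requires knowing that $p$ does not divide the \emph{denominator} of $B_{2k}$ for $1\le k\le (p-3)/2$. The paper invokes the von Staudt--Clausen theorem here: the denominator of $B_{2k}$ is $\prod_{(q-1)\mid 2k} q$, and since $2k\le p-3<p-1$ the prime $q=p$ cannot appear. With that one sentence added, your proof is complete and essentially identical to the paper's.
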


Note that $\ord_p(4)\mid (p-1)/2.$ Using quadratic reciprocity it is not difficult to
show (see Proposition~\ref{prop:splitsing}) that if $p\equiv 1~({\rm mod~}8)$,  
then $\ord_p(4)\ne (p-1)/2$.
 Thus Theorem~\ref{thm:GB} has the following corollary.

\begin{corollary}
\label{cor:allepriem}
Primes $p$ satisfying $p\equiv 1~({\rm mod~}8)$ 
are G-irregular.
\end{corollary}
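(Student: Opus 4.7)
The plan is to deduce the corollary directly from Theorem~\ref{thm:GB} together with the quadratic-reciprocity observation flagged immediately before the statement (and later formalised as Proposition~\ref{prop:splitsing}). Taking the contrapositive of Theorem~\ref{thm:GB}, a prime $p$ is G-irregular whenever $p$ is B-irregular \emph{or} $\ord_p(4)\ne (p-1)/2$; hence for $p\equiv 1~({\rm mod~}8)$ it suffices to establish the latter condition unconditionally, with no need to say anything about B-(ir)regularity.

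First I would recall that, by the second supplementary law of quadratic reciprocity, $p\equiv 1~({\rm mod~}8)$ forces $2$ to be a quadratic residue modulo $p$, which gives $2^{(p-1)/2}\equiv 1~({\rm mod~}p)$ and therefore $\ord_p(2)\mid (p-1)/2$. Next I would split on the parity of $\ord_p(2)$, using the elementary identity $\ord_p(4)=\ord_p(2)/\gcd(2,\ord_p(2))$. If $\ord_p(2)$ is even, then $\ord_p(4)=\ord_p(2)/2\le (p-1)/4<(p-1)/2$. If $\ord_p(2)$ is odd, then $\ord_p(4)=\ord_p(2)$, but the equality $\ord_p(2)=(p-1)/2$ is ruled out because $8\mid p-1$ makes $(p-1)/2$ even. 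Either way $\ord_p(4)\ne (p-1)/2$, and Theorem~\ref{thm:GB} then delivers G-irregularity.

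There is essentially no serious obstacle; the only point to watch is applying the correct direction of Theorem~\ref{thm:GB} (the ``only if'' direction, which imposes no Wieferich hypothesis). Once Proposition~\ref{prop:splitsing} has been proven the whole argument collapses to one sentence: by that proposition $\ord_p(4)\ne (p-1)/2$, so by Theorem~\ref{thm:GB} the prime $p$ is G-irregular.
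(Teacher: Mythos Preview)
Your proposal is correct and follows essentially the same route as the paper: invoke quadratic reciprocity (the content of Proposition~\ref{prop:splitsing}) to rule out $\ord_p(4)=(p-1)/2$ when $p\equiv 1~({\rm mod~}8)$, and then apply the contrapositive of Theorem~\ref{thm:GB}. The only cosmetic difference is that the paper (in the proof of Proposition~\ref{prop:splitsing}) observes directly that $4^{(p-1)/4}=2^{(p-1)/2}\equiv 1~({\rm mod~}p)$, giving $\ord_p(4)\mid (p-1)/4$ in one step, whereas you reach the same conclusion via a parity split on $\ord_p(2)$; both are fine.
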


Although the B-irregular  primes are very mysterious, the set
of primes 
$p$ such that $\ord_p(4)=(p-1)/2$ is far less so.
Its distributional properties are analyzed in detail in 
Proposition \ref{prop:main}. The special case $a=d=1$ in
combination with Theorem \ref{thm:GB}, yields
the following estimate.
\begin{theorem}
\label{totalEirregular}
Let $\cP_G$ be the set of G-irregular primes.  
Let $\epsilon>0$ be arbitrary and fixed. Then we have, for every $x$ sufficiently large, 
$$
 \cP_G(x)  > \left( 1-\frac{3}{2}A-\epsilon\right) \frac{x}{\log x},$$
with
$A$ the Artin constant 
\begin{equation}
\label{Artinconstantdef}
A= \prod_{\textrm{prime $p$}}\left( 1-\frac{1}{p(p-1)} \right)  = 0.3739558136192022880547280543464\ldots .
\end{equation} 
\end{theorem}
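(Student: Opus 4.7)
The plan is to deduce Theorem \ref{totalEirregular} as a short consequence of Theorem \ref{thm:GB} together with the special case $a=d=1$ of Proposition \ref{prop:main}.

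First, taking contrapositives of Theorem \ref{thm:GB}, every odd prime $p$ satisfying $\ord_p(4)\neq (p-1)/2$ is automatically G-irregular, regardless of its B-(ir)regularity. Hence
\[
\cP_G(x) \;\ge\; \pi(x) - N(x), \qquad N(x) := \#\bigl\{p \le x : \ord_p(4) = (p-1)/2\bigr\}.
\]
Note that we simply discard the additional G-irregular primes that come from B-irregularity; this can only strengthen the final inequality and removes all need to appeal to Theorem \ref{thm:Met} or the result of Luca--Pizarro-Madariaga--Pomerance in the present proof.

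Second, I would apply Proposition \ref{prop:main} with $a=d=1$ to obtain the unconditional upper bound
\[
N(x) \;\le\; \Bigl(\tfrac{3}{2}A + \tfrac{\epsilon}{2}\Bigr)\pi(x)
\]
for every sufficiently large $x$. Combining the two displays and invoking the prime number theorem $\pi(x)\sim x/\log x$ yields
\[
\cP_G(x) \;\ge\; \Bigl(1 - \tfrac{3}{2}A - \tfrac{\epsilon}{2}\Bigr)\pi(x) \;>\; \Bigl(1 - \tfrac{3}{2}A - \epsilon\Bigr)\frac{x}{\log x}
\]
for $x$ large enough, which is the claimed bound. Since $\tfrac{3}{2}A \approx 0.561 < 1$, the right-hand side is a non-trivial positive lower order of magnitude $x/\log x$.

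The main obstacle is entirely packaged in Proposition \ref{prop:main}; once that is available, the present deduction amounts to a single line. The real work in the proposition is an unconditional upper bound of Artin/Hooley type: one decomposes the event $\ord_p(4)=(p-1)/2$ into its two sub-cases (either $2$ is a primitive root modulo $p$, or $p\equiv 3\pmod 4$ with $\ord_p(2)=(p-1)/2$, since $\ord_p(4)=\ord_p(2)/\gcd(2,\ord_p(2))$), recasts each as a splitting condition in the Kummer extensions $\Q(\zeta_n,2^{1/n})$, and then controls the resulting inclusion--exclusion sum by a Brun--Hooley or Erd\H{o}s-style truncation together with an effective Chebotarev estimate, matching the heuristic density $\tfrac{3}{2}A$ up to the admissible loss of $\epsilon$.
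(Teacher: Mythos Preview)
Your argument is correct and matches the paper's own derivation: the paper explicitly obtains Theorem~\ref{totalEirregular} by combining Theorem~\ref{thm:GB} with the case $a=d=1$ of Proposition~\ref{prop:main} (equivalently, as the special case $a=d=1$ of inequality~\eqref{dainequality} in Theorem~\ref{thm:mainArtin}). Your final paragraph sketching the content of Proposition~\ref{prop:main} is also in line with the paper's approach via Proposition~\ref{prop:splitsing} and Theorem~\ref{lenstrarekenkundigerij}; the mod~$4$ splitting you describe refines to the paper's mod~$8$ splitting once one notes that $\ord_p(2)=(p-1)/2$ forces $2$ to be a quadratic residue and hence $p\equiv 7\pmod 8$ in your second sub-case.
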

\noindent Note that $1-3A/2=0.4390662795\ldots.$

Using Siegel's heuristic, 
one arrives at the following conjecture.

\begin{conjecture} 
\label{con:global}
Asymptotically we have
$$
\cP_G(x)\sim \left( 1-\frac{3A}{2\sqrt{e}}\right) \pi(x)~(
 \approx 0.6597765\cdot\pi(x)).
$$
\end{conjecture}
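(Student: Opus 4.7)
The plan is to obtain the constant $1-3A/(2\sqrt{e})$ as a product of two heuristic densities joined by Theorem~\ref{thm:GB}. After rewriting ``$p$ is G-regular'' as ``$p$ is B-regular and $\ord_p(4)=(p-1)/2$'', the conjecture splits into a Bernoulli contribution and an Artin-type contribution, and the asymptotic density of G-irregular primes will be $1$ minus the product of the two individual conjectural densities.

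For the Bernoulli factor I would invoke Siegel's heuristic exactly as in Conjecture~\ref{conj:Siegel1}: modelling the events $p\mid B_{2k}$ as mutually independent with probability $1/p$ across $k=1,\ldots,(p-3)/2$ gives the conjectural B-regularity density $\lim_{p\to\infty}(1-1/p)^{(p-3)/2}=e^{-1/2}$. For the order factor I would invoke the $a=d=1$ case of Proposition~\ref{prop:main}, according to which the density of primes with $\ord_p(4)=(p-1)/2$ equals $3A/2$; here the Artin constant $A$ is inflated by the correction factor $3/2$ because $4=2^2$ and the Chebotarev densities inside the splitting fields $\Q(\zeta_n,4^{1/n})$ decouple nontrivially from those of $\Q(\zeta_n,2^{1/n})$ at the prime $2$, producing exactly the $3/2$ factor after the standard inclusion-exclusion over divisors $n$.

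Granting the two events ``$p$ is B-regular'' and ``$\ord_p(4)=(p-1)/2$'' to be statistically independent across primes, as is customary in this genre of heuristic, the conjectural density of G-regular primes is
$$\frac{3A}{2}\cdot e^{-1/2}=\frac{3A}{2\sqrt{e}},$$
so that G-irregular primes have density $1-3A/(2\sqrt{e})\approx 0.6597765$, giving $\cP_G(x)\sim(1-3A/(2\sqrt{e}))\pi(x)$ as asserted.

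The principal obstacle is that none of the three ingredients is currently accessible for a genuine proof. Siegel's heuristic itself remains wide open, even quantitatively weaker versions being beyond reach; the near-primitive root density supplied by Proposition~\ref{prop:main} is only provable conditionally on GRH via Hooley's method; and any serious attempt to justify the independence assumption would require an equidistribution statement relating the $p$-divisibility pattern of $B_2,\ldots,B_{p-3}$ to the $2$-adic cyclic structure of $(\Z/p\Z)^*$, for which there is presently no technology. This is precisely why the statement is posed as a conjecture rather than a theorem, and why the corresponding unconditional lower bound in Theorem~\ref{totalEirregular} can only use the near-primitive root input and throws the Bernoulli input away.
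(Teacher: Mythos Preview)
Your heuristic derivation is correct and matches the paper's own justification essentially line for line: the paper also combines Theorem~\ref{thm:GB} with the density $3A/2$ from Proposition~\ref{prop:main} (case $a=d=1$) and Siegel's $1/\sqrt{e}$ heuristic, under an independence assumption, to arrive at the conjectural density $1-3A/(2\sqrt{e})$. Your additional remarks on why the statement must remain a conjecture are accurate and in the same spirit as the paper's discussion.
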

 
 Some numerical evidence for Conjecture \ref{con:global} is given in Section \ref{sec:data}.  
The heuristic behind this conjecture is straightforward.
Under the Generalized Riemann Hypothesis (GRH) it can be shown that the set 
of primes 
$p$ such that
$\ord_p(4)=(p-1)/2$ has density $3A/2$ (Proposition~\ref{prop:main} with $a=d=1$). By
Siegel's heuristic one expects a fraction ${3A}/{(2\sqrt{e})}$ of these to be B-regular.
The conjecture follows on invoking Theorem~\ref{thm:GB}.

\subsubsection{G-irregular primes in prescribed arithmetic progressions}

Using Proposition \ref{prop:main} we can give a 
non-trivial lower bound for the number of
G-irregular primes in a prescribed arithmetic
progression. Recall that if $S\subseteq T$ 
are sets of natural numbers, then the relative density of
$S$ in $T$ is defined as 
$${\lim}_{x\rightarrow \infty}\frac{S(x)}{T(x)},$$
if this limit exists.
We use the notations gcd and lcm for
greatest common divisor, respectively least common multiple, but often will
write $(a,b)$,  rather than $\gcd(a,b)$. 
We also use the ``big O" notation $O$, and  we write $O_\rho$ to emphasize the dependence of the implied
constant on some parameter (or a list of parameters) $\rho$.   

\begin{proposition}
\label{prop:main}
Given two coprime positive integers $a$ and $d,$ we put 
\begin{equation}
\label{cqda}
\cQ(d,a)=\{p>2:~p\equiv a~({\rm mod~}d),~\ord_p(4)=(p-1)/2\}.
\end{equation}
Let $\epsilon$ be arbitrary and fixed. 
Then,  for every $x$ sufficiently large we have
\begin{equation}
\label{eq:underGRH2}
\cQ(d,a)(x)<
\frac{(\delta(d,a)+\epsilon)}{\varphi(d)}\frac{x}{\log x} 
\end{equation}
with
$$
\delta(d,a)=c(d,a)R(d,a)A,
$$
where
\begin{equation*}
R(d,a)=2\prod_{p\mid (a-1,d)}\left( 1-\frac{1}{p}\right) 
\prod_{p\mid d}\left( 1+\frac{1}{p^2-p-1}\right) ,
\end{equation*}
and
$$
c(d,a)=
\begin{cases}
3/4  & \text{if~}4\nmid d;\\
1/2  & \text{if~}4\mid d,8\nmid d,~a\equiv 1~({\rm mod~}4);\\
1  & \text{if~}4\mid d,8\nmid d,~a\equiv 3~({\rm mod~}4);\\
1  & \text{if~}8\mid d,~a\not\equiv 1~({\rm mod~}8);\\
0  & \text{if~}8\mid d,~a\equiv 1~({\rm mod~}8),
\end{cases}
$$
is the relative density of the primes
$p\not\equiv 1~({\rm mod~}8)$ in the
set of primes $p\equiv a~({\rm mod~}d).$

Under  GRH, we have
\begin{equation}
\label{eq:underGRH}
\cQ(d,a)(x)= 
\frac{\delta(d,a)}{\varphi(d)}\frac{x}{\log x}
+
O_d\left(\frac{x\log\log x}{\log^2x}\right).
\end{equation}
\end{proposition}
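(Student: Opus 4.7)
The plan is to adapt the Hooley-Artin machinery for Artin's primitive root conjecture to the near-primitive root condition $\ord_p(4)=(p-1)/2$ restricted to the progression $p\equiv a\pmod{d}$.

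Writing $e_p=(p-1)/\ord_p(4)$, which is always even since $4=2^2$ is a square, the target condition is $e_p=2$. Möbius inversion applied to the divisors of $e_p/2$ gives
\begin{equation*}
\mathbf{1}[\ord_p(4)=(p-1)/2]=\sum_{k\ge 1}\mu(k)\,\mathbf{1}\bigl[2k\mid p-1,\ 4^{(p-1)/(2k)}\equiv 1\pmod p\bigr].
\end{equation*}
Using $4^{1/(2k)}=2^{1/k}$, summing over primes $p\le x$ with $p\equiv a\pmod{d}$ rewrites $\cQ(d,a)(x)$ as $\sum_{k\ge 1}\mu(k)\,M_k(x;d,a)$, where $M_k(x;d,a)$ counts unramified primes whose Frobenius in $\mathrm{Gal}(L_k/\Q)$ lies in a prescribed union of conjugacy classes for
\begin{equation*}
L_k:=\Q(\zeta_{\mathrm{lcm}(d,2k)},\,2^{1/k}).
\end{equation*}
The prescription forces the identity on $\Q(\zeta_{2k},2^{1/k})$ (equivalent to $2k\mid p-1$ and $4$ being a $2k$-th power mod $p$) and the class of $a$ on $\Q(\zeta_d)$.

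Next, apply effective Chebotarev. Under GRH (Lagarias-Odlyzko) each $M_k$ has a square-root error; truncating the Möbius sum at $k\le\xi:=(\log x)/(6\log\log x)$ and summing yields the main term $\delta(d,a)\,x/(\varphi(d)\log x)$ with error $O_d(x\log\log x/\log^2 x)$, together with negligible tail contributions from $\xi<k\le\sqrt{x}/\log^2 x$ (bounded via Brun-Titchmarsh) and $k>\sqrt{x}/\log^2 x$ (where the congruence forces $p\mid 4^m-1$ for some $m\le\sqrt{x}$, giving $O(\sqrt{x}\log x)$ primes in total). This establishes \eqref{eq:underGRH}. For the unconditional bound \eqref{eq:underGRH2}, fix $K=K(\epsilon)$ so that the absolute tail $\sum_{k>K}|\mu(k)|\,N_k(d,a)/[L_k:\Q]<\epsilon/2$, evaluate $M_k$ for $k\le K$ by unconditional Chebotarev (no GRH needed since $K$ is fixed), and bound the remaining terms by Brun-Titchmarsh term-by-term — the inevitable factor-of-$2$ loss is absorbed by the choice of $K$, yielding the correct leading constant up to $\epsilon$.

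The density
\begin{equation*}
\frac{\delta(d,a)}{\varphi(d)}=\sum_{k\ge 1}\frac{\mu(k)\,N_k(d,a)}{\varphi(d)\,[L_k:\Q]},
\end{equation*}
with $N_k(d,a)$ the admissible Frobenius-class count, is evaluated as an Euler product via the multiplicative structure of the degrees and class counts over squarefree $k$. Primes $q\nmid 2d$ each contribute the Artin factor $1-1/(q(q-1))$, producing the constant $A$; odd primes $q\mid d$ contribute local factors that assemble into $R(d,a)$, with the special correction at divisors of $(a-1,d)$; and the prime $2$ contributes the factor $c(d,a)$.

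The main obstacle is this local density computation at the prime $2$, responsible for the five-case definition of $c(d,a)$. Because of the entanglement $\Q(\sqrt{2})\subset\Q(\zeta_8)$, the degree $[L_k:\Q]$ drops whenever $8\mid\mathrm{lcm}(d,2k)$ and $2\mid k$, and the compatibility between the prescribed identity Frobenius on the Kummer side and the prescribed $a$-class on the cyclotomic side then depends on the Legendre symbol $(2/p)$, i.e.\ on $p\bmod 8$. Tracking these compatibilities through the Möbius sum and matching them against the density of primes $p\not\equiv 1\pmod 8$ inside the progression $p\equiv a\pmod{d}$ is the technical heart of the proof, and produces exactly the piecewise formula for $c(d,a)$ — in particular $c(d,a)=0$ when $8\mid d$ and $a\equiv 1\pmod 8$, consistent with Corollary~\ref{cor:allepriem}.
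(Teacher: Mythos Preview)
Your proposal is correct in outline and would work, but it takes a genuinely different route from the paper's proof. The paper does \emph{not} run the Hooley--Lenstra machinery directly for $g=4$. Instead it first invokes Proposition~\ref{prop:splitsing} to translate the condition $\ord_p(4)=(p-1)/2$ into a condition on $\ord_p(2)$ that depends only on $p\bmod 8$: the set is empty for $p\equiv 1$, becomes $\ord_p(2)=p-1$ for $p\equiv 3,5$, and becomes $\ord_p(2)=(p-1)/2$ for $p\equiv 7$. After lifting to modulus $\mathrm{lcm}(8,d)$ (which accounts for the factor $c(d,a)$), each nonempty case is a density already available in the literature: the cases $j=3,5$ are exactly the primitive-root-in-progression densities $\delta(2,1,d,a)$ computed in \cite{Mor1,Mor2}, and the case $j=7$ is handled by a short direct calculation of $\delta(2,2,d,a)$ via Theorem~\ref{lenstrarekenkundigerij}. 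In effect the paper front-loads the mod-$8$ split so that the $\sqrt{2}\in\Q(\zeta_8)$ entanglement you identify is absorbed into the reduction, and then quotes known closed-form answers. Your approach instead carries that entanglement through the full M\"obius sum and extracts $c(d,a)$ at the end of an Euler-product computation done from scratch. Your route is more self-contained but longer; the paper's is shorter but leans on the external results \cite{Mor1,Mor2}.
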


We discuss the connection of this result and 
Artin's primitive root conjecture in Section \ref{Artinmethods}.  
A numerical demonstration of the
estimate \eqref{eq:underGRH} is given in Section~\ref{sec:data} for 
some choices of $a$ and $d$. 
 By Proposition~\ref{prop:splitsing}, in case $8 | d$ and $a\equiv 1~({\rm mod~}8)$,  
we in fact have $\cQ(d,a)=\emptyset$ and so $\cQ(d,a)(x)=0$. 

Combination of Theorem~\ref{thm:GB} and 
Proposition~\ref{prop:main} yields directly the following result.

\begin{theorem}
\label{thm:mainArtin}
Given two coprime positive integers $a$ and $d$,  we put
$$
\cP_G(d,a)=\{p:\, p\equiv a~({\rm mod~}d) \textrm{ and  $p$ is G-irregular}\}.
$$
Let $\epsilon$ be arbitrary and fixed. 
For every $x$ sufficiently large, we have
\begin{equation}
\label{dainequality}
\cP_G(d,a)(x)>
\frac{(1-\delta(d,a)-\epsilon)}{\varphi(d)}\frac{x}{\log x},
\end{equation}
where $\delta(d,a)$ is defined in Proposition~\ref{prop:main}.

Under GRH we have
\begin{equation}
\cP_G(d,a)(x)\ge 
\frac{(1-\delta(d,a))}{\varphi(d)}\frac{x}{\log x}
+
O_d\left( \frac{x \log\log x}{\log^{2} x} \right) .
\end{equation}  
\end{theorem}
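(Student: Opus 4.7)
The plan is to use the inclusion of G-regular primes in the complement set $\cQ(d,a)$ via Theorem~\ref{thm:GB}, and then to subtract the upper bound on $\cQ(d,a)(x)$ given by Proposition~\ref{prop:main} from an asymptotic count of primes in the arithmetic progression $a\pmod d$.

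First, I would invoke Theorem~\ref{thm:GB} in only one direction: if an odd prime $p$ is G-regular, then $\ord_p(4)=(p-1)/2$. Consequently, every G-regular prime $p\equiv a\pmod d$ with $p>2$ lies in the set $\cQ(d,a)$ defined in \eqref{cqda}. Writing $\pi(x;d,a)$ for the counting function of primes congruent to $a\pmod d$ up to $x$, this yields
\begin{equation*}
\cP_G(d,a)(x)\ge \pi(x;d,a)-\cQ(d,a)(x)+O(1),
\end{equation*}
where the $O(1)$ absorbs the prime $p=2$ when it happens to belong to the progression (which, since $\gcd(a,d)=1$, can occur only for $d\in\{1,2\}$).

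Next I would import the classical count of primes in arithmetic progressions. For fixed $d$, the Siegel--Walfisz theorem (or simply Dirichlet's theorem combined with PNT for the case $d=1$) gives $\pi(x;d,a)=\frac{1}{\varphi(d)}\frac{x}{\log x}+o\!\left(\frac{x}{\log x}\right)$, while GRH provides the much sharper estimate $\pi(x;d,a)=\frac{1}{\varphi(d)}\mathrm{Li}(x)+O(x^{1/2}\log(dx))$. In the first regime I would combine with the upper bound \eqref{eq:underGRH2} to obtain, for any $\epsilon>0$ and $x$ large enough, the unconditional inequality \eqref{dainequality}. In the GRH regime, the error term $O_d(x\log\log x/\log^2 x)$ from \eqref{eq:underGRH} dominates the tiny PNT error $O(x^{1/2}\log(dx))$, and the same subtraction yields the stated lower bound with error $O_d(x\log\log x/\log^2 x)$.

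There is no serious obstacle: the argument is pure bookkeeping once Theorem~\ref{thm:GB} and Proposition~\ref{prop:main} are in hand. The only points requiring minor attention are the contribution of the prime $p=2$ (negligible, absorbed in $O(1)$) and the degenerate case $8\mid d$, $a\equiv 1\pmod 8$, in which $\cQ(d,a)=\emptyset$ by Proposition~\ref{prop:splitsing} and $\delta(d,a)=0$; here the bound collapses to a restatement of the prime number theorem for the progression, consistent with Corollary~\ref{cor:allepriem}, which tells us that \emph{every} prime in such a progression is automatically G-irregular.
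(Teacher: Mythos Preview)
Your proposal is correct and is exactly the argument the paper has in mind: the paper states that the theorem follows directly by combining Theorem~\ref{thm:GB} with Proposition~\ref{prop:main}, and you have simply written out that combination (G-regular primes in the progression lie in $\cQ(d,a)$, so subtract the upper bound for $\cQ(d,a)(x)$ from $\pi(x;d,a)$). Your handling of the $O(1)$ from $p=2$ and of the degenerate case $8\mid d$, $a\equiv 1\pmod 8$ is also in line with the paper's remarks.
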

 
 Note that 
the inequality \eqref{dainequality} (with $a=d=1$) yields Theorem \ref{totalEirregular} as a
special case.

An easy analysis 
(see \eqref{eq:delta} in Section \ref{size}) shows that $1-\delta(d,a)>0,$ and so 
we obtain the following corollary, which can be compared with Theorem~\ref{thm:Met}. 

\begin{corollary}
\label{cor:cor}
 Each primitive residue class contains a subset of G-irregular primes having positive density.
\end{corollary}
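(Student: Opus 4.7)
The plan is to apply Theorem~\ref{thm:mainArtin} and then reduce the problem to the purely arithmetic inequality $\delta(d,a)<1$ for every pair of coprime positive integers $a,d$. Indeed, Theorem~\ref{thm:mainArtin} furnishes, for every $\epsilon>0$ and every $x$ sufficiently large, the lower bound
$$
\cP_G(d,a)(x)>\frac{1-\delta(d,a)-\epsilon}{\varphi(d)}\cdot\frac{x}{\log x}.
$$
Once I know $\delta(d,a)<1$, I can choose $\epsilon$ small enough to conclude that the lower density of G-irregular primes in the arithmetic progression $a\pmod d$ is at least $(1-\delta(d,a))/\varphi(d)>0$. So everything hinges on bounding $\delta(d,a)=c(d,a)R(d,a)A$ strictly below $1$; this is the content of the forward reference to \eqref{eq:delta}.

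The tool I would use is the identity
$$
\frac{1}{A}=\prod_{p}\left(1+\frac{1}{p^{2}-p-1}\right),
$$
which is immediate from the definition of $A$ in \eqref{Artinconstantdef}. Isolating the factor at $p=2$, which equals $2$, gives $\prod_{p\text{ odd}}(1+1/(p^{2}-p-1))=1/(2A)$, and because only finitely many primes divide $d$ the corresponding partial product over odd prime divisors of $d$ is strictly smaller than $1/(2A)$.

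Next I would split into two cases according to the parity of $d$. If $d$ is odd, then $c(d,a)=3/4$ and every prime appearing in $\prod_{p\mid d}$ is odd, so using the trivial bound $\prod_{p\mid(a-1,d)}(1-1/p)\le 1$ yields
$$
R(d,a)\le 2\prod_{\substack{p\mid d\\ p\text{ odd}}}\left(1+\frac{1}{p^{2}-p-1}\right)<\frac{1}{A},
$$
and hence $\delta(d,a)<3/4$. If $d$ is even, then $(a,d)=1$ forces $a$ to be odd, so $2\mid(a-1,d)$; the factor $1/2$ coming from $p=2$ in $\prod_{p\mid(a-1,d)}(1-1/p)$ exactly cancels the factor $2$ coming from $p=2$ in $\prod_{p\mid d}(1+1/(p^{2}-p-1))$, and the same infinite-product estimate over odd primes again gives $R(d,a)<1/A$, so $\delta(d,a)<c(d,a)\le 1$. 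The subcase $c(d,a)=0$ is of course trivial.

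The main (and essentially only) delicate point is this cancellation at $p=2$ when $c(d,a)=1$, since there the bound is tight: without the factor $1/2$ from $\prod_{p\mid(a-1,d)}$, one would get only $\delta(d,a)<2$, which is useless. So the crucial observation to verify is that $2\mid d$ forces $a$ to be odd, whence $2\mid(a-1)$ and therefore $p=2$ genuinely appears in $(a-1,d)$.
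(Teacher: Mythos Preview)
Your proposal is correct and follows essentially the same approach as the paper. The paper packages the key inequality $\delta(d,a)<1$ as the chain $\delta(d,a)\le 2AG(d)/(2,d)<1$ in \eqref{eq:delta} of Section~\ref{size}, but the content is identical to your argument: the finite product $\prod_{p\mid d}(1+1/(p^{2}-p-1))$ is a strict partial product of $1/A$, and the factor $(2,d)$ in the denominator encodes precisely your ``cancellation at $p=2$'' observation that $2\mid d$ forces $2\mid(a-1,d)$.
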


Moreover, by Corollary~\ref{cor:allepriem} in case  $a\equiv 1~({\rm mod~}8)$ and $8|d$ 
we have $1-\delta(d,a)=1$ and in fact 
$\cP_G(d,a)(x)=\pi(x;d,a)$,    
 where 
 $$
 \pi(x;d,a) = \#\{p\le x:p\equiv a~({\rm mod~}d)\}.
 $$   
In the remaining cases we have $\delta(d,a) > 0$, 
and so $1-\delta(d,a) < 1$. 
However, $1-\delta(d,a)$ can be arbitrarily close to 1 (see Proposition~\ref{inf-sup}), 
and the same holds 
for the relative density of $\cP_G(d,a)$ by Theorem~\ref{thm:mainArtin}.  

Corollary  \ref{cor:cor} taken by itself is not a deep result and can 
be easily proved directly, see Moree and Sha \cite[Proposition 1.6]{MS}.

The reasoning that leads us
to Conjecture \ref{con:global} in
addition with the assumption that
B-irregular primes are equidistributed over residue classes with a fixed modulus, 
suggests that the following conjecture might be true.

\begin{conjecture} 
\label{con:local}
Given two coprime positive integers $a$ and $d$, asymptotically we have
 $$\cP_G(d,a)(x)\sim \left( 1-\frac{\delta(d,a)}{\sqrt{e}}\right) \pi(x;d,a),$$ where $\delta(d,a)$ is defined
 in Proposition \ref{prop:main}.
 \end{conjecture}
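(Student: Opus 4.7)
The plan is to support Conjecture \ref{con:local} by a heuristic argument combining the rigorous results already at hand with a Siegel-type independence hypothesis; since even Conjecture \ref{conj:Siegel2} is wide open, a fully rigorous justification is out of reach and only a plausibility argument is available. First I would restate the G-irregularity condition via Theorem \ref{thm:GB}: a prime $p\equiv a\pmod d$ is G-regular precisely when it is simultaneously B-regular and a member of $\cQ(d,a)$. Thus
$$
\cP_G(d,a)(x)=\pi(x;d,a)-\#\{p\le x:\, p\in\cQ(d,a),\ p\text{ is B-regular}\},
$$
and the task reduces to estimating the joint count of these two conditions.

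Next I would invoke two separate asymptotic inputs. For the multiplicative-order condition, Proposition \ref{prop:main} together with the prime number theorem for arithmetic progressions gives, conditionally on GRH,
$$
\cQ(d,a)(x)\sim \delta(d,a)\,\pi(x;d,a).
$$
For B-regularity I would use the Siegel heuristic underlying Conjecture \ref{conj:Siegel2}: modelling the divisibilities $p\mid B_{2k}$ for $2\le 2k\le p-3$ as independent events each of probability $1/p$ gives a B-regularity probability tending to $(1-1/p)^{(p-3)/2}\to 1/\sqrt{e}$ as $p\to\infty$, and assuming equidistribution over primitive residue classes modulo $d$ the number of B-regular primes $p\le x$ with $p\equiv a\pmod d$ should be asymptotic to $\pi(x;d,a)/\sqrt{e}$.

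The crucial step is to postulate that the Artin-type condition defining $\cQ(d,a)$, which is governed by Frobenius splitting behaviour in an associated Kummer-cyclotomic tower over $\Q(\zeta_p)$, is statistically independent of the Bernoulli-numerator divisibilities controlling B-regularity. Under this independence assumption the joint count above is asymptotic to $(\delta(d,a)/\sqrt{e})\,\pi(x;d,a)$, and subtraction yields the conjectured asymptotic $(1-\delta(d,a)/\sqrt{e})\,\pi(x;d,a)$.

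The main obstacle is precisely this independence hypothesis, which has no current theoretical backing. None of the individual ingredients is unconditional either: it is not even known that there are infinitely many B-regular primes, Conjecture \ref{conj:Siegel2} is wide open, and there is no known mechanism linking or decoupling Bernoulli congruences from Chebotarev-type conditions on $\ord_p(4)$. Consequently the best one can offer at present is the heuristic above together with the numerical evidence in Section \ref{sec:data}; a genuine proof would seem to require both a quantitative density theorem for B-regular primes in arithmetic progressions and a rigorous decoupling statement between Bernoulli divisibilities and Frobenius conditions, neither of which appears within reach of current methods.
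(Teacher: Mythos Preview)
Your proposal is correct and matches the paper's own treatment: since this is a conjecture, the paper likewise offers only a heuristic, combining Theorem~\ref{thm:GB}, the (conditional) density $\delta(d,a)$ for $\cQ(d,a)$ from Proposition~\ref{prop:main}, the Siegel heuristic that a proportion $1/\sqrt{e}$ of primes are B-regular, and the assumption that B-(ir)regularity is equidistributed over residue classes and independent of the Artin-type condition. You have reproduced exactly this reasoning and correctly flagged that no rigorous proof is currently available.
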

 
  Numerical evidence for Conjecture \ref{con:local} is presented in Section \ref{sec:data}.  
Note that this conjecture implies Conjecture \ref{con:global} (choosing $a=d=1$). 
On observing that $\delta(a,d)=0$ if and only if 
$8\mid d$ and $a\equiv 1~({\rm mod~}8)$, 
it also implies the following conjecture. 

\begin{conjecture} 
\label{con:local2}
Consider the subset of G-regular primes in the primitive residue
class $a~({\rm mod~}d)$.
It has a positive density, provided 
we are not in the case $8\mid d$ and
$a\equiv 1~({\rm mod~}8)$. 
\end{conjecture}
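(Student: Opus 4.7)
The plan is to derive Conjecture \ref{con:local2} as an immediate consequence of the more refined Conjecture \ref{con:local}. The bulk of the work lies in verifying that the sole vanishing case of the resulting density expression corresponds precisely to the exceptional residue class behavior already flagged by Corollary \ref{cor:allepriem}.

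First, I would assume Conjecture \ref{con:local}, which states
$$\cP_G(d,a)(x) \sim \left(1 - \frac{\delta(d,a)}{\sqrt{e}}\right) \pi(x;d,a).$$
Since $\pi(x;d,a) \sim x/(\varphi(d)\log x)$ by the prime number theorem for arithmetic progressions, the number of G-regular primes $p \equiv a \pmod d$ with $p \le x$ is
$$\pi(x;d,a) - \cP_G(d,a)(x) \sim \frac{\delta(d,a)}{\sqrt{e}}\, \pi(x;d,a),$$
so this set has positive density among all primes precisely when $\delta(d,a) > 0$.

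Next, I would verify when $\delta(d,a) = c(d,a)\, R(d,a)\, A$ is strictly positive. The Artin constant $A = \prod_p(1 - 1/(p(p-1)))$ is a product of factors in $(0,1)$ and hence positive; the factor $R(d,a)$ is a finite product of positive rationals and so positive; therefore $\delta(d,a) > 0$ if and only if $c(d,a) > 0$. A direct inspection of the piecewise definition of $c(d,a)$ shows that it vanishes exactly when $8 \mid d$ and $a \equiv 1 \pmod 8$. In every other coprime class we have $c(d,a) \in \{1/2, 3/4, 1\}$, whence $\delta(d,a) > 0$ and a positive density of G-regular primes is obtained.

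The main obstacle — and what keeps this statement a conjecture rather than a theorem — is that any unconditional approach appears to require knowing that B-regular primes form a positive-density subset (at least within the set $\cQ(d,a)$ of Proposition \ref{prop:main}, which by Theorem \ref{thm:GB} contains all G-regular primes in the class). Even the infinitude of B-regular primes is open, as noted after Conjecture \ref{conj:Siegel2}, so this strategy cannot currently be executed without assuming Conjecture \ref{con:local} or a comparable hypothesis. One might hope to produce a sieve upper bound on the number of B-irregular primes inside $\cQ(d,a)$ strictly below $|\cQ(d,a)(x)|$, but the Luca--Pizarro-Madariaga--Pomerance bound \eqref{eq:Luca} runs in the wrong direction and is in any case far too weak to yield such a separation.
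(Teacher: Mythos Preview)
Your proposal is correct and follows essentially the same route as the paper: the authors state that Conjecture~\ref{con:local} implies Conjecture~\ref{con:local2} ``on observing that $\delta(d,a)=0$ if and only if $8\mid d$ and $a\equiv 1~({\rm mod~}8)$,'' and you have simply spelled out why this equivalence holds (namely, $A>0$, $R(d,a)>0$, so $\delta(d,a)=0$ iff $c(d,a)=0$). Your concluding discussion of the unconditional obstacle is a helpful elaboration not present in the paper but entirely in its spirit.
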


\section{Preliminaries}

In this section, we gather some results which are used later on. 

\subsection{Elementary results}

For a primitive Dirichlet character $\chi$ with an odd conductor $f,$
the generalized Euler numbers $E_{n,\chi}$ are defined by (see \cite[Section 5.1]{KH1})
\begin{equation*}
2\sum_{a=1}^{f}\frac{(-1)^a\chi(a)e^{at}}{e^{ft}+1}=\sum_{n=0}^{\infty}E_{n,\chi}\frac{t^n}{n!}. 
\end{equation*}

For any odd prime $p$, let $\omega_p$ be the Teichm\"uller character of $\Z/p\Z.$ Any multiplicative character of $\Z/p\Z$ is of the form $\omega_p^k$ for some $1\le k \le p-1$. 
In particular, the odd characters are $\omega_p^{k}$ with $k=1,3,\ldots,p-2$.

The following lemma, formulated and proved in the
notation of \cite{KH1}, is an analogue of a well-known result for the generalized Bernoulli numbers; see \cite[Corollary 5.15]{Wa}. 
\begin{lemma}
\label{p4} 
Suppose that $p$ is an odd prime and $k,n$ 
are non-negative integers. Then $E_{k,\omega_p^{n-k}}\equiv E_{n}(0) ~({\rm mod~}p).$
\end{lemma}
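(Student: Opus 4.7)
The plan is to show that both sides of the congruence reduce modulo $p$ to the same explicit sum $\sum_{a=1}^{p-1}(-1)^a a^n$, in direct parallel with the proof of the Bernoulli analogue \cite[Corollary 5.15]{Wa}.

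First, starting from the defining generating function for $E_{k,\chi}$ and substituting the Euler polynomial generating function $2e^{(a/f)u}/(e^u+1)=\sum_{k\ge 0} E_k(a/f)\,u^k/k!$ with $u=ft$, a coefficient comparison yields the closed-form representation
$$E_{k,\chi} \;=\; f^k \sum_{a=1}^{f-1}(-1)^a \chi(a)\, E_k\!\left(\tfrac{a}{f}\right)$$
valid for any primitive Dirichlet character $\chi$ of odd conductor $f\ge 2$. Specializing to $\chi=\omega_p^{n-k}$, which has conductor $p$ in the generic case $(p-1)\nmid n-k$, gives $E_{k,\omega_p^{n-k}} = p^k \sum_{a=1}^{p-1}(-1)^a \omega_p^{n-k}(a)\, E_k(a/p)$.

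Next I invoke the Teichm\"uller congruence $\omega_p(a)\equiv a\pmod p$ together with the explicit expansion $E_k(x)=\sum_{j=0}^{k}\binom{k}{j}E_j(0)\,x^{k-j}$. A key point is that each $E_j(0)$ lies in $\mathbb{Z}[1/2]$, and hence in $\mathbb{Z}_p$ for every odd prime $p$; this follows inductively from the integrality of the classical Euler numbers together with the identity $E_j=\sum_{i=0}^{j}\binom{j}{i}E_i(0)\,2^i$. Rearranging,
$$p^k\, a^{n-k}\, E_k(a/p) \;=\; \sum_{j=0}^{k}\binom{k}{j}E_j(0)\, a^{n-j}\, p^j \;\equiv\; a^n \pmod p,$$
so that $E_{k,\omega_p^{n-k}}\equiv \sum_{a=1}^{p-1}(-1)^a a^n\pmod p$. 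For the other side, I apply the multiplication formula for Euler polynomials at the odd modulus $p$,
$$E_n(0) \;=\; p^n \sum_{a=0}^{p-1}(-1)^a E_n(a/p),$$
derived analogously from the generating function via the identity $\sum_{a=0}^{p-1}(-e^t)^a=(1+e^{pt})/(1+e^t)$. The identical polynomial expansion then gives $p^n E_n(a/p)\equiv a^n\pmod p$, hence $E_n(0)\equiv \sum_{a=1}^{p-1}(-1)^a a^n\pmod p$ as well. Matching the two reductions completes the proof.

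The main obstacle is essentially clerical: establishing the finite-sum representation and the multiplication identity in precisely the form required, and justifying the $p$-integrality of the $E_j(0)$ so that the mod-$p$ reductions above are legitimate. The degenerate case $(p-1)\mid n-k$, in which $\omega_p^{n-k}$ is trivial rather than primitive of conductor $p$, is not covered by the finite-sum formula as stated and requires a short separate argument using $E_k(1)=-E_k(0)$ for $k\ge 1$ together with a Kummer-style congruence $E_k(0)\equiv E_n(0)\pmod p$ whenever $k\equiv n\pmod{p-1}$; this auxiliary congruence itself can be extracted from the same integral representation applied to a companion primitive character, or more cleanly from the $p$-adic measure-theoretic formalism of \cite{KH1}.
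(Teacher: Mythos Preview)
Your argument is correct in spirit and genuinely different from the paper's. The paper does not pass through the explicit finite sum $\sum_{a=1}^{p-1}(-1)^a a^n$ at all; instead it invokes the $p$-adic (fermionic) integral representations from \cite{KH1},
\[
E_{k,\omega_p^{n-k}}=\int_{\mathbb Z_p}\omega_p^{n-k}(a)\,a^k\,d\mu_{-1}(a),\qquad
E_n(0)=\int_{\mathbb Z_p}a^n\,d\mu_{-1}(a),
\]
and then simply subtracts, using $\omega_p^{n-k}(a)a^k\equiv a^n\pmod p$ under the integral sign. That proof is three lines long and uniform in $k,n$, but it black-boxes the measure-theoretic framework of \cite{KH1}. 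Your route, by contrast, is the direct classical analogue of Washington's argument for \cite[Corollary~5.15]{Wa}: it is self-contained and elementary (only the finite-sum formula for $E_{k,\chi}$, the multiplication formula for $E_n$, and the $p$-integrality of the $E_j(0)$ are needed), at the cost of having to isolate the degenerate case $(p-1)\mid n-k$.

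One remark on that degenerate case: you do not actually need to reach back into the $p$-adic formalism to close it. Your own multiplication-formula computation already gives $E_m(0)\equiv\sum_{a=1}^{p-1}(-1)^a a^m\pmod p$ for every $m\ge 1$, and Fermat's little theorem then yields $E_k(0)\equiv E_n(0)\pmod p$ whenever $k\equiv n\pmod{p-1}$ and $k,n\ge 1$; combined with $E_{k,\mathrm{triv}}=-E_k(1)=E_k(0)$ for $k\ge 1$, this finishes those cases entirely elementarily. (The only residual subtlety is $k=0$ with $(p-1)\mid n$, where the generating-function definition gives $E_{0,\mathrm{triv}}=-1$; but this situation never occurs in the paper's application, where the lemma is used only with $k=0$ and $n$ odd, $1\le n\le p-2$.)
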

\begin{proof} 
By \cite[Proposition 5.4]{KH1}, for arbitrary integers $k,n\geq 0$, we have
\begin{equation}\label{(1)} 
E_{k,\omega_p^{n-k}}=\int_{\mathbb{Z}_{p}}\omega_p^{n-k}(a)a^{k}d\mu_{-1}(a).
\end{equation}
By \cite[Proposition 2.1 (1)]{KH1}, $E_{n}(0)$ also can be expressed as a $p$-adic integral,
namely
\begin{equation}\label{(2)} 
E_{n}(0)=\int_{\mathbb{Z}_{p}}a^{n}d\mu_{-1}(a).
\end{equation}
Since $\omega_p(a)\equiv a ~({\rm mod~}p)$,  we have
\begin{equation}\label{(3)}
\begin{aligned}
\omega_p^{n-k}(a)\equiv a^{n-k} ~({\rm mod~}p) \qquad \textrm{and} \qquad \omega_p^{n-k}(a)a^{k}\equiv a^{n} ~({\rm mod~}p).
\end{aligned}
\end{equation}
From \eqref{(1)}, \eqref{(2)} and \eqref{(3)}, we deduce that
$$
E_{k,\omega_p^{n-k}}-E_{n}(0)=\int_{\mathbb{Z}_{p}}(\omega_p^{n-k}(a)a^{k}-a^{n})d\mu_{-1}(a)\equiv
0~({\rm mod~}p).\qedhere
$$
\end{proof}

Recall that $\cQ(d,a)$ is defined 
in \eqref{cqda}. For ease of notation
we put
\begin{equation}
\label{Qdef}
\cQ=\cQ(1,1)=
\{p>2:\ord_p(4)=(p-1)/2\}.
\end{equation}
For the understanding of the distribution of the 
primes in $\cQ,$ it turns out to be very useful to
consider their residues modulo $8$.

\begin{proposition}
\label{prop:splitsing}
For $j=1,3,5,7$ we put
$\cQ_j=\cQ(8,j).$
We have $$\cQ=\cQ_1\cup \cQ_3 \cup \cQ_5 \cup \cQ_7,$$
with $\cQ_1=\emptyset,$
and, for $j=3,5,$
$$\cQ_j=\{p: p \equiv j~({\rm mod~}8),~\ord_p(2)=p-1\},$$
and, furthermore,
$$\cQ_7=\{p: p \equiv 7~({\rm mod~}8),~ \ord_p(2)=(p-1)/2\}.$$
\end{proposition}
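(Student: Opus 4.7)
The plan is to reduce everything to a case analysis on $p \bmod 8$, using two elementary ingredients. The first is the identity
$$
\ord_p(4)=\frac{\ord_p(2)}{\gcd(2,\ord_p(2))},
$$
which shows that the equation $\ord_p(4)=(p-1)/2$ is equivalent to exactly one of the following two alternatives holding: either $\ord_p(2)$ is even and $\ord_p(2)=p-1$, or $\ord_p(2)$ is odd and $\ord_p(2)=(p-1)/2$. The second ingredient is the supplementary quadratic reciprocity law: $2$ is a quadratic residue modulo $p$ (equivalently $\ord_p(2)\mid (p-1)/2$) if and only if $p\equiv \pm 1~({\rm mod~}8)$.

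With these two facts in hand, the proof is pure bookkeeping. For $p\equiv 1~({\rm mod~}8)$ one has $(p-1)/2$ even, which kills the ``odd $\ord_p(2)$'' branch; at the same time $2$ is a quadratic residue, so $\ord_p(2)\le (p-1)/2<p-1$, killing the other branch. Hence $\cQ_1=\emptyset$. For $p\equiv 3~({\rm mod~}8)$ or $p\equiv 5~({\rm mod~}8)$, $2$ is a non-residue, so $\ord_p(2)\nmid (p-1)/2$; this excludes $\ord_p(2)=(p-1)/2$ and forces the remaining alternative $\ord_p(2)=p-1$, which is consistent with the parity of $(p-1)/2$ in each case. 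For $p\equiv 7~({\rm mod~}8)$, $2$ is a quadratic residue so $\ord_p(2)\ne p-1$, while $(p-1)/2$ is odd, so the only possibility is $\ord_p(2)=(p-1)/2$.

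Combining the four cases yields the partition $\cQ=\cQ_1\cup \cQ_3\cup\cQ_5\cup \cQ_7$ together with the explicit descriptions stated in the proposition. There is no real obstacle here; the statement is elementary and the entire argument comes down to monitoring the parity of $(p-1)/2$ and applying the $\bmod~8$ criterion for $2$ to be a quadratic residue. The only place where one has to be a little careful is in the cases $p\equiv 3,5~({\rm mod~}8)$, where the non-residuality of $2$ is used in a slightly asymmetric way from how it enters in the case $p\equiv 7~({\rm mod~}8)$, but this is straightforward.
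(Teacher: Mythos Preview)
Your proof is correct and follows essentially the same route as the paper's: both arguments hinge on the identity $\ord_p(4)=\ord_p(2)/\gcd(2,\ord_p(2))$ (the paper writes this as a two-case formula) together with the supplementary law for the quadratic character of $2$ modulo $p$, and then carry out the same case analysis modulo $8$. The only cosmetic difference is that you state the two alternatives for $\ord_p(2)$ up front and then eliminate one in each residue class, whereas the paper handles the parity of $\ord_p(2)$ inside each case.
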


\begin{proof}
If $p\equiv 1~({\rm mod~}8)$, then
by quadratic reciprocity $2^{(p-1)/2}
\equiv 1~({\rm mod~}p)$,  and we
conclude that $\ord_p(4)\mid (p-1)/4$ 
and hence $\cQ_1=\emptyset$. 

Note that
$$
\ord_p(4)=
\begin{cases}
\ord_p(2) & {\rm ~if~}\ord_p(2){\rm ~is~odd};\\
\ord_p(2)/2 & {\rm ~otherwise}.
\end{cases}
$$
In case $p\equiv \pm 3 ~({\rm mod~}8),$ 
we have $2^{(p-1)/2}
\equiv -1~({\rm mod~}p)$, and so
$\ord_p(2)$ must be even. The 
assumption that $p$ is in $\cQ$ now
implies that $\ord_p(2)=2\cdot \ord_p(4)=
p-1.$
In case $p\equiv 7 ~({\rm mod~}8),$ 
we have $2^{(p-1)/2}
\equiv 1~({\rm mod~}p)$, and so
$\ord_p(2)$ must be odd. The 
assumption that $p$ is in $\cQ$ now
implies that $\ord_p(2)=
\ord_p(4)=(p-1)/2$.  
\end{proof}

\subsection{The size of $\delta(d,a)$}
\label{size}
In this section we study the 
extremal behaviour of the quantity $\delta(d,a)$ defined in Proposition~\ref{prop:main}. 
We put
$$
G(d)=\prod_{p \mid d}\left(1+\frac{1}{p^2-p-1}\right),\qquad    F(d) = \frac{\varphi(d)}{d}G(d).
$$
An easy calculation gives that
$$
G(d)=\frac{1}{A}\prod_{p \nmid d}\left(1-\frac{1}{p(p-1)}\right)=\frac{1}{A}\left(1+O(\frac{1}{q})\right),
$$
where $q$ is the smallest prime not dividing $d$.  
Trivially, $G(d)<1/A$, and $G(d)<1/(2A)$ when $d$ is odd.

It is a classical result (see, for instance, \cite[Theorem 13.14]{Apostol}), that
$$
\liminf_{d \to \infty} \frac{\varphi(d)}{d}  \log\log d = e^{-\gamma}, 
$$ 
where $\gamma$ is the Euler-Mascheroni constant ($\gamma=0.577215664901532\ldots$).
The proof is in essence an application of
Mertens' theorem (see \cite[Theorem 13.13]{Apostol})
\begin{equation}
    \label{mertens}
    \prod_{p\le x}\left( 1-\frac{1}{p}\right) \sim \frac{e^{-\gamma}}{\log x}. 
\end{equation}
An easy variation of the latter proof yields 
$$
\liminf_{d \to \infty} AF(d)  \log\log d = e^{-\gamma}. 
$$

Recall that
\begin{equation*}
R(d,a)=2G(d)\prod_{p \mid b}
\left(1-\frac{1}{p}\right)=2G(d)\frac{\varphi(b)}{b}, \text{ with } b=(a-1,d).
\end{equation*}
Note that $$2F(d)\le R(d,a)\le 2G(d)/(2,d)<1/A,$$ 
and hence $\delta(d,a)=0$ or
\begin{equation}  
    \label{eq:delta}
0<AF(d)\le \delta(d,a)\le 2AG(d)/(2,d)<1.
\end{equation}

\begin{proposition}  \label{inf-sup}
We have 
$$
\liminf_{d\to \infty} \min_{\substack{1\le a < d \\ (a,d)=1\\ \delta(d,a)>0}} \delta(d,a)\log\log d  = e^{-\gamma}
\qquad \text{and} \qquad 
\limsup_{d\to \infty} \max_{\substack{1\le a < d \\ (a,d)=1}} \delta(d,a)  = 1. 
$$
\end{proposition}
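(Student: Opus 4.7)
The plan is to handle the two equalities separately, both reducing almost entirely to inputs already assembled in Section \ref{size}.

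For the liminf: For the lower bound I would simply combine the inequality $\delta(d,a)\geq AF(d)$ (the left half of \eqref{eq:delta}, valid whenever $\delta(d,a)>0$) with the Mertens-type formula $\liminf_{d\to\infty} AF(d)\log\log d = e^{-\gamma}$ already recorded in Section~\ref{size}. For the matching upper bound I would exhibit a sequence realising the infimum: take $d_y = 4\prod_{3\leq p\leq y} p$ and $a=1$. The modulus lies in the case $4\mid d,\, 8\nmid d,\, a\equiv 1\pmod 4$, so $c(d_y,1)=1/2$; since $a-1=0$ we have $b=d_y$, and a short calculation collapses $\delta(d_y,1)$ to exactly $AF(d_y)$. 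Because $d_y$ and the primorial $\prod_{p\leq y}p$ have the same set of prime divisors, the quantities $AF(d_y)$ and $\log\log d_y$ behave asymptotically like their primorial counterparts, so the stated Mertens-type formula is realised along this sequence.

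For the limsup: The upper bound $\limsup \leq 1$ is just the right half of \eqref{eq:delta}, namely $\delta(d,a)<1$ for every $(d,a)$. For the matching lower bound I would take $d_y=8\prod_{3\leq p\leq y}p$ and use the Chinese Remainder Theorem to pick $a_y$ with $a_y\equiv 3\pmod 8$ and $a_y\equiv 2\pmod p$ for every odd prime $p\leq y$. The condition $a_y\equiv 3\pmod 8$ places us in the case $8\mid d,\, a\not\equiv 1\pmod 8$, so $c(d_y,a_y)=1$; the residue conditions force $b=(a_y-1,d_y)=2$ because $a_y-1\equiv 2\pmod 8$ contributes exactly one factor of $2$ and no odd prime $p\mid d_y$ divides $a_y-1$. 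Hence $\delta(d_y,a_y) = 1\cdot 2G(d_y)\cdot(1/2)\cdot A = AG(d_y)$, which by the formula $AG(d)=\prod_{p\nmid d}(1-1/(p(p-1)))$ tends to $1$ as $y\to\infty$.

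The main obstacle is essentially bookkeeping rather than analysis. The point to watch is in the limsup construction, where one must ensure that the chosen $a_y$ actually yields $b=2$ rather than a larger divisor of $d_y$; this is exactly what the extra CRT conditions $a_y\equiv 2\pmod p$ are for, and is also why the ``$8\mid d$'' branch is needed (to use $c=1$ together with a maximally small $b$). Both the analytic Mertens-type bound and the two-sided inequalities $AF(d)\leq \delta(d,a)<1$ of \eqref{eq:delta} are already in place in Section~\ref{size}, so no genuinely new estimates are required.
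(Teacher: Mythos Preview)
Your proof is correct and follows essentially the same route as the paper's. The only cosmetic difference is in the limsup construction: where you invoke the Chinese Remainder Theorem to manufacture $a_y$ with $a_y\equiv 3\pmod 8$ and $a_y\equiv 2\pmod p$ for each odd $p\mid d_y$, the paper writes down an explicit shift $a_n=2+d_n$ (or $2+3d_n$ when $d_n\equiv 7\pmod 8$), which achieves the same goal of making $(a_n-1,8d_n)$ a power of two and $a_n\not\equiv 1\pmod 8$.
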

\begin{proof}
From the above remarks it follows that 
the limit inferior and superior
are $\ge e^{-\gamma},$ respectively $\le 1.$
We consider two infinite families of pairs $(a,d)$
to show that these bounds are actually sharp.

Let $n\ge 3$ be 
arbitrary. Put $d_n=\prod_{3\le p \le n} p$.  
We have $c(4d_n,1)=1/2$ and
$$
\delta(4d_n,1)=(1+o(1))\prod_{2\le p\le n}(1-1/p), \quad (n\rightarrow \infty)
$$ 
by Proposition \ref{prop:main}. 
Using Mertens' theorem \eqref{mertens}
and the prime number theorem, we deduce that 
$$
\delta(4d_n,1)\sim \frac{e^{-\gamma}}{\log n}\sim 
\frac{e^{-\gamma}}{\log \log (4d_n)},\quad (n\rightarrow \infty),
$$
and so the limit inferior actually equals 
$e^{-\gamma}.$

Put
$$a_n=
\begin{cases}
2+3d_n& \text{~if~}d_n\equiv 7 ~({\rm mod}~8);\\
2+d_n & \text{~otherwise}.
\end{cases}
$$
We have $a_n\not\equiv 1 ~({\rm mod}~8),$ $1\le a_n<8d_n,$ $(a_n,8d_n)=1,$ and
$(a_n-1,8d_n)$ is a power of two. We infer
that $R(8d_n,a_n)=1/A+O(1/n)$ and $c(8d_n,a_n)=1,$
and so $\delta(8d_n,a_n)=1+O(1/n),$
showing that the limit superior equals $1$.  
\end{proof}

The two constructions in the above proof
are put to the test 
in Table~\ref{tab:cRA}.
The table also gives an idea of how fast 
the lower bound $1-\delta(4d_n,1)$ 
for the relative density of the set $\cP_G(4d_n,1)$ 
established in Theorem~\ref{thm:mainArtin}, tends to $1$.  

\begin{table}   
\centering
\caption{Some values of $\delta(4d_n,1)$ and $\delta(8d_n,a_n)$}
\label{tab:cRA}
\begin{tabular}{|c|c|c|c|c|c|c|}  
\hline
$n$ & $10^3$ & $10^4$ & $10^5$ & $10^6$ & $10^7$    \\ \hline 
$\delta(4d_n,1)\approx $ & 0.080954  &  0.060884 & 0.048752 & 0.040638 & 0.034833   \\ \hline

$\delta(4d_n,1)e^{\gamma}\log \log (4d_n)\approx $ & 
0.989659  & 0.997633  & 0.999422 &  0.999851 &  0.999960  \\ \hline

$\delta(8d_n,a_n) \approx $ & 0.999872  &  0.999990 & 0.999999 & 0.9999999 & 0.99999999  \\ \hline

\end{tabular}
\end{table}

\section{Some results related to Artin's primitive root conjecture}
\label{Artinmethods}
It is natural to wonder whether the set
$\cQ,$ see \eqref{Qdef},  is an infinite set or not. This is closely
related to Artin's primitive root conjecture 
stating that if $g\ne -1$ or a square, then 
infinitely often $\ord_p(g)=p-1$ (which is maximal by Fermat's little theorem).
In case $g$ is a square, the maximal order is
$(p-1)/2$ and one can wonder whether this happens
infinitely often. 
If this is so for $g=4,$ then our
set $\cQ$ is infinite. We now go into a bit
more technical detail.

We say that a set of primes $\cP$ has
density $\delta(\cP)$ and satisfies a Hooley type
estimate, if
\begin{equation}
\label{hooleytype}
 \cP(x)=\delta(\cP)\frac{x}{\log x}+
O\left(\frac{x\log\log x}{\log^2x}\right),
\end{equation} 
where the implied constant may depend on $\cP$. 

Let $g\not \in\{-1,0,1\}$ be 
an integer.  
Put
$${\mathcal P}_g=\{p: \ord_p(g)=p-1\}.$$
Artin in 1927 conjectured that this set, when $g$ is not a square, is infinite and also conjectured a density for it.
To this day, this conjecture is open; see \cite{Moree} for a survey.
Hooley \cite{Hooley1} proved in 1967
that if the Riemann Hypothesis holds for the number fields
$\Q(\zeta_n,g^{1/n})$ with all square-free $n$ (this is a weaker
form of the GRH), then the
estimate \eqref{hooleytype} holds for the set $\cP_g$  with 
\begin{equation*}
\delta(g):=\delta(\cP_g)=\sum_{n=1}^\infty
\frac{\mu(n)}{[\Q(\zeta_n,g^{1/n}):\Q]},
\end{equation*} 
where $\mu$ is the M{\"o}bius function; 
and he also showed that $\delta(g)/A$ is rational 
and explicitly determined its value,
with $A$ the Artin constant (see \eqref{Artinconstantdef}). 
For example, in case $g=2$ we have $\delta(2)=A$.   

By the Chebotarev density theorem,  the density of primes 
$p\equiv 1 ~({\rm mod~}n)$ such that $\ord_p(g)\mid (p-1)/n$ 
is equal to $1/[\Q(\zeta_n,g^{1/n}):\Q]$.  
Note that in order to ensure that
$\ord_p(g)=p-1$,  it is enough
to show that there is no prime $q$ 
such that $\ord_p(g)\mid (p-1)/q$. 
By inclusion and exclusion we
are then led to expect that
the set $\cP_g$ has natural 
density $\delta(g)$. The problem
with establishing this rigorously is
that the Chebotarev density theorem
only allows one to take finitely many
splitting conditions into account. Let
us now consider which result we can obtain on 
restricting to the primes $q\le y.$ 
Put
\begin{equation}
\label{classicalhooley2} 
\delta_y(g)=\sum_{P(n)\le y}
\frac{\mu(n)}{[\Q(\zeta_n,g^{1/n}):\Q]},
\end{equation} 
where $P(n)$ denotes the largest prime factor of $n.$
Now we may apply
the Chebotarev density theorem and
we obtain 
that 
\begin{equation}
\label{deltayg}
\cP_g(x)\le 
(\delta_y(g)+\epsilon)\frac{x}{\log x},
\end{equation}
where $\epsilon>0$ is arbitrary and
$x, y$ are sufficiently large (where sufficiently large may depend on the
choice of $\epsilon$).

Completing the sum in 
\eqref{classicalhooley2} and using that
$[\Q(\zeta_n,g^{1/n}):\Q]\gg_{g}n\varphi(n)$ (see \cite[Proposition 4.1]{Wagstaff}) and the classical estimate 
$\varphi(n)^{-1}=O((\log \log n)/n)$,
we obtain that
$$
\delta_y(g)=\delta(g)+O_g\left(\sum_{n\ge y}
\frac{1}{n\varphi(n)}\right)=\delta(g)+O_g\left(\frac{\log \log y}{y} \right).
$$ 
On combining this with 
\eqref{deltayg} we obtain the
estimate
\begin{equation}
\label{deltag}
\cP_g(x)\le 
(\delta(g)+\epsilon)\frac{x}{\log x},
\end{equation}
where $\epsilon>0$ is arbitrary and
$x, y$ are sufficiently large 
 (where sufficiently large may depend on the
choices of $\epsilon$ and $g$). 

For any integer $g \not\in \{-1,0,1\}$ and any integer $t \ge 1$, put
$$
{\mathcal P}(g,t)=\{p: \, 
p\equiv 1 ~({\rm mod~}t),~ \ord_p(g)=(p-1)/t\}.
$$
Now, if the Riemann Hypothesis holds for the number fields
$\Q(\zeta_{nt},g^{1/nt})$ with all square-free $n$,  then 
Hooley's proof can be easily extended, resulting in the
estimate \eqref{hooleytype} 
for the set $\cP(g,t)$  
with density
\begin{equation}
\label{classicalwagstaff} 
\delta(g,t)=\sum_{n=1}^\infty
\frac{\mu(n)}{[\Q(\zeta_{nt},g^{1/nt}):\Q]},
\end{equation} 
and with $\delta(g,t)/A$ a rational number; see \cite{Lenstra}. 
This number was first computed 
explicitly by Wagstaff \cite[Theorem 2.2]{Wagstaff}, 
which can be done much more
compactly and elegantly these days 
using the character sum method of Lenstra et al.~\cite{LMS}.

By Wagstaff's work \cite{Wagstaff} we have
$\delta(\cQ)=\delta(4,2)=3A/2.$ 
Alternatively it is an easy and instructive calculation
to determine $\delta(4,2)$ oneself. Since 
$\sqrt{2}\in \mathbb Q(\zeta_n)$ if and only if $8 \mid n$, we see
that if $4\nmid n,$ then 
$[\Q(\zeta_{2n},2^{1/2n}):\Q]=\varphi(2n)n$ and 
so by \eqref{classicalwagstaff}, 
$$
\delta(4,2)=\sum_{n=1}^{\infty}\frac{\mu(n)}{\varphi(2n)n}=\sum_{2\nmid n}^{\infty}\frac{\mu(n)}{\varphi(n)n}
+\sum_{2\mid n}^{\infty}\frac{\mu(n)}{2\varphi(n)n}=
\frac{3}{4}\sum_{2\nmid n}^{\infty}\frac{\mu(n)}{\varphi(n)n}=\frac{3}{2}A,
$$
where we use the fact that
$$
\sum_{n=1\atop (m,n)=1}^{\infty}\mu(n)f(n)=\prod_{p\nmid m}(1-f(p))  
$$
holds certainly true if the sum is absolutely convergent and $f(n)$ is a multiplicative function defined on the square free integers (cf. 
Moree and Zumalac\'arregui \cite[Appendix A.1]{Zuma}, 
where a similar problem with $g=9$
instead of $g=4$ is considered).

The following result generalizes the above to 
the case where we require the primes in
${\mathcal P}(g,t)$ to also be in some prescribed
arithmetic progression. It follows from Lenstra's work \cite{Lenstra}, who introduced Galois theory
into the subject.

\begin{theorem}
\label{lenstrarekenkundigerij}
Let $1\le a\le d$ be coprime integers. 
Let $t\ge 1$ be an integer. 
Put
$${\mathcal P}(g,t,d,a)=\{p:\, 
p\equiv 1~({\rm mod~}t),~p\equiv a~({\rm mod~}d),~\ord_p(g)=(p-1)/t\}.$$
Let
$\sigma_a$ be the automorphism of $\mathbb Q(\zeta_d)$ determined by
$\sigma_a(\zeta_d)=\zeta_d^a$. Let $c_a(m)$ be $1$ if the restriction
of $\sigma_a$ to the field $\mathbb Q(\zeta_d)\cap \mathbb Q(\zeta_m,g^{1/m})$
is the identity and $c_a(m)=0$ otherwise. Put
\begin{equation*}
\delta(g,t,d,a)=\sum_{n=1}^{\infty}\frac{\mu(n)c_a(nt)}{[\mathbb Q(\zeta_d,\zeta_{nt},g^{1/nt}):\mathbb Q]}.
\end{equation*}
Then, assuming RH for all 
number fields $\Q(\zeta_d,\zeta_{nt},g^{1/nt})$ with
$n$ square-free,
we have
\begin{equation}
\label{hooleygeneral}
{\mathcal P}(g,t,d,a)(x)=\delta(g,t,d,a)\frac{x}{\log x}+
O_{g,t,d}\left(\frac{x\log\log x}{\log^2x}\right),
\end{equation} 

Unconditionally we have the weaker 
statement that
\begin{equation}
\label{deltagtfa}
{\mathcal P}(g,t,d,a)(x)  \le 
(\delta(g,t,d,a)+\epsilon)\frac{x}{\log x},
\end{equation}
where $\epsilon>0$ is arbitrary and
$x$ is sufficiently large (where sufficiently large may depend on the
choice of $\epsilon,g,t,d$ and $a$).
\end{theorem}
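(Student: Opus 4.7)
The plan is to extend Hooley's 1967 conditional proof of Artin's primitive root conjecture following the Galois-theoretic framework developed by Lenstra. The starting observation is that $p \in \cP(g,t,d,a)$ if and only if $p \equiv a~({\rm mod~}d)$, $p$ splits completely in $L_t := \mathbb{Q}(\zeta_t, g^{1/t})$, and for no prime $q$ does $p$ split completely in $L_{tq} := \mathbb{Q}(\zeta_{tq}, g^{1/tq})$. Joining the arithmetic progression condition to the splitting condition amounts to asking that the Frobenius at $p$ in $\mathbb{Q}(\zeta_d, \zeta_{nt}, g^{1/nt})$ restrict to $\sigma_a$ on $\mathbb{Q}(\zeta_d)$ and to the identity on $L_{nt}$; the quantity $c_a(nt)$ is exactly the indicator that these two conditions are compatible on the intersection $\mathbb{Q}(\zeta_d) \cap L_{nt}$, so by class field theory / Chebotarev the density of such primes equals $c_a(nt)/[\mathbb{Q}(\zeta_d, \zeta_{nt}, g^{1/nt}):\mathbb{Q}]$.

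For the conditional asymptotic \eqref{hooleygeneral}, I would apply inclusion–exclusion over square-free $n$ supported on primes $q$, obtaining
\[
\cP(g,t,d,a)(x) \;=\; \sum_{n \text{ squarefree}} \mu(n)\, M(x; nt, d, a),
\]
where $M(x; m, d, a)$ counts primes $p \le x$ with $p \equiv a~({\rm mod~}d)$ that split completely in $L_m$. Split the outer sum using Hooley's three ranges determined by the largest prime factor $P(n)$: a small range $P(n) \le \xi_1 = \tfrac{1}{6}\log x$, a medium range $\xi_1 < P(n) \le \xi_2 = \sqrt{x}/\log^2 x$, and a large range $P(n) > \xi_2$. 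In the small range, the effective Chebotarev density theorem under GRH for $\mathbb{Q}(\zeta_d, \zeta_{nt}, g^{1/nt})$ gives the main term $\delta(g,t,d,a)\,x/\log x$, with the tail of the series bounded using $[\mathbb{Q}(\zeta_{nt}, g^{1/nt}):\mathbb{Q}] \gg_{g,t} n\varphi(n)$ (a Wagstaff-type degree lower bound, still valid after adjoining $\zeta_d$ up to constants depending on $d$). In the medium range, one controls $M(x;nt,d,a)$ by the unconditional Brun–Titchmarsh inequality applied to $p \equiv 1~({\rm mod}~nt)$. In the large range, the essential trick is that $p \mid g^{(p-1)/(tq)}-1$ forces $q \le (p-1)/t \le x/t$ and that the number of such $p$ for a fixed $q$ is $O(x/(q\log x))$ by Brun–Titchmarsh, so summing over $q > \xi_2$ gives a contribution of $O_{g,t,d}(x\log\log x/\log^2 x)$. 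Combining the three estimates yields the stated error term.

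For the unconditional upper bound \eqref{deltagtfa}, I would truncate the inclusion–exclusion at some parameter $y$, keeping only $n$ with $P(n) \le y$, and use the ordinary (non-effective) Chebotarev density theorem on the finitely many fields involved to obtain
\[
\cP(g,t,d,a)(x) \;\le\; \bigl(\delta_y(g,t,d,a)+\tfrac{\epsilon}{2}\bigr)\frac{x}{\log x}
\]
for $x$ large, exactly as in the derivation of \eqref{deltag} from \eqref{classicalhooley2}. Completing the series and using the degree bound $[\mathbb{Q}(\zeta_{nt}, g^{1/nt}):\mathbb{Q}] \gg_{g,t} n\varphi(n)$ together with $\varphi(n)^{-1} = O(\log\log n / n)$, one has $\delta_y(g,t,d,a) = \delta(g,t,d,a) + O_{g,t,d}(\log\log y / y)$. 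Choosing $y$ large enough in terms of $\epsilon$ then gives \eqref{deltagtfa}.

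The main obstacle is the analysis of the medium range in the GRH argument: one must exploit the fact that if $p$ splits completely in $L_{tq}$ with $q$ moderately large, then $p \equiv 1~({\rm mod~}tq)$ and $g^{(p-1)/(tq)} \equiv 1~({\rm mod~}p)$, forcing a nontrivial divisibility that can be counted by Brun–Titchmarsh uniformly in $q$. Bookkeeping the dependence of implied constants on $d$ (which enters through the degree $[\mathbb{Q}(\zeta_d) \cap L_{nt} : \mathbb{Q}]$ via $c_a(nt)$) and verifying that the entanglement between $\mathbb{Q}(\zeta_d)$ and $L_{nt}$ does not spoil the Wagstaff degree lower bound is the technical heart of the proof; since $\mathbb{Q}(\zeta_d) \cap L_{nt} \subseteq \mathbb{Q}(\zeta_{\mathrm{lcm}(d,nt)})$ has degree bounded in terms of $d$ alone, the lower bound is preserved up to a factor depending only on $d$, which is absorbed into the $O_{g,t,d}$ notation.
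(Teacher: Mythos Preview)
Your proposal is correct and follows exactly the approach the paper intends. The paper itself does not give a detailed proof of this theorem: it states that the result ``follows from Lenstra's work \cite{Lenstra}, who introduced Galois theory into the subject'' and that it is ``a simple combination of two cases each of which have been intensively studied, namely the primes having a near-primitive root ($d=1,t>1$), and the primes in arithmetic progression having a prescribed primitive root ($t=1$).'' Your sketch---inclusion--exclusion over square-free $n$, the Chebotarev interpretation of $c_a(nt)/[\mathbb Q(\zeta_d,\zeta_{nt},g^{1/nt}):\mathbb Q]$, Hooley's three-range decomposition under GRH, and the truncation at $P(n)\le y$ for the unconditional upper bound---is precisely the argument the paper alludes to, and in fact the unconditional part mirrors verbatim the paper's own derivation of \eqref{deltag} from \eqref{classicalhooley2} just before the theorem is stated.
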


It seems that this result has
not been formulated in the literature.
It is a simple combination of two cases each of which
have been intensively studied, namely the primes
having a near-primitive root $(d=1,t>1),$ and
the primes in arithmetic progression having
a prescribed primitive root ($t=1$).

As before $\delta(g,t,d,a)/A$ is a 
rational number that can be explicitly computed.
The case $g=t=2,$ $d=8$ and $a=7$ is one of the most
simple cases. This is a lucky coincidence, as in our proof of 
Proposition \ref{prop:main} 
we will apply Theorem \ref{lenstrarekenkundigerij}  to determine $\delta(\cQ_7)=\delta(2,2,8,7)$.  

Note that 
$
\cP(g,t,d,a)\subseteq \{p:\, p\equiv 1~({\rm mod~}t),~p\equiv a~({\rm mod~}d),~\ord_p(g) \mid (p-1)/t\}.
$
It is shown in \cite[Theorem 1.3]{MS} that if 
the latter set 
is not empty, then it contains a positive density subset of 
primes that are not in ${\mathcal P}(g,t,d,a).$

\section{Proofs of the main results}

It suffices to prove Theorems~\ref{thm:class} and \ref{thm:GB} and Proposition~\ref{prop:main}.

\subsection{Proof of Theorem~\ref{thm:class}}

By \cite[Proposition 3.4]{HK}, we obtain 
\begin{equation*}
h_{p,2}^{-}=(-1)^{\frac{p-1}{2}}2^{2-p}E_{0,\omega_p}E_{0,\omega_p^{3}}\cdots E_{0,\omega_p^{p-2}}.
\end{equation*}
Using Lemma \ref{p4} and \eqref{eq:EnGn}, we then infer that 
\begin{align*}
h_{p,2}^{-} & \equiv (-1)^{\frac{p-1}{2}}2^{2-p}E_{1}(0)E_{3}(0)\cdots E_{p-2}(0) \\
& \equiv \frac{(-1)^{\frac{p-1}{2}}2^{2-p}}{(p-1)!!}G_2 G_4 \cdots G_{p-3} G_{p-1} ~({\rm mod~}p). 
\end{align*}
So, if $p$ is G-irregular, then $p \mid h_{p,2}^{-}$.  
Conversely, if $p \mid h_{p,2}^{-}$ and  $p$ is not a Wieferich prime, 
then by \eqref{eq:BnGn} we first have $p \nmid G_{p-1}$, 
and thus $p$ is G-irregular. 
\qed

\subsection{Proof of Theorem~\ref{thm:GB}}

We first recall a fact about Bernoulli numbers that 
any odd prime $p$ does not divide the denominators of the Bernoulli numbers $B_2, B_4, \ldots, B_{p-3}$ 
(this follows from the von Staudt-Clausen theorem). 
Now, given an odd prime $p$, if it is G-regular, then 
there is no $1\le k \le (p-3)/2$ such that 
$p$ divides the integer $G_{2k}$, that is, $2(1-2^{2k})B_{2k}$ by \eqref{eq:BnGn}; 
and so $p$ is B-regular and $\ord_p(4)=(p-1)/2$. 

Conversely, if $p$ is B-regular and $\ord_p(4)=(p-1)/2$, 
then $p$ does not divide the denominators of the Bernoulli numbers $B_2, B_4, \ldots, B_{p-3}$ 
and $p\nmid 2^{2k}-1$ for 
 $1\le k\le (p-3)/2.$
Consequently $p$ does not divide any integer $G_{2k}=2(1-2^{2k})B_{2k}$ 
with $1\le k\le (p-3)/2$, 
which implies that $p$ is G-regular. \qed

\subsection{Proof of Proposition~\ref{prop:main}}
The proof relies on Theorem \ref{lenstrarekenkundigerij}. 
We only establish the assertion  
under GRH, as the proof of
the unconditional result is very similar. Namely, 
it uses the unconditional estimate \eqref{deltagtfa} instead of
\eqref{hooleygeneral}.

It is enough to prove the result in case $8\mid d.$ 
In fact, in case $8\nmid d$ we lift the congruence class $a~({\rm mod~}d)$
to congruence classes
with modulus lcm$(8,d)$. The ones among those that
are $\not\equiv 1~({\rm mod~}8)$ have 
relative density $AR({\rm lcm}(8,d),a)=AR(d,a)$ (as $R(d,a)$ only depends
on the odd prime factors of $d$). The one that is 
$\equiv 1~({\rm mod~}8)$ (if it exists at all) has relative density zero.
It follows that the relative density of the unlifted congruence 
equals $c(d,a)R(d,a)A$ with $c(d,a)$ the relative density  of the primes
$p\not\equiv 1~({\rm mod~}8)$ in the congruence class 
 $a~({\rm mod~}d).$ The easy determination of $c(d,a)$ is left
 to the interested reader.

From now on we assume that $8\mid d.$ 
We can write $a\equiv j~({\rm mod~}8)$ for some
$j\in \{1,3,5,7\}$ and 
distinguish three cases. 

{\it Case I: $j=1$.}  By Proposition \ref{prop:splitsing} the set $\cQ(d,a)$ is empty and the result
holds trivially true.

{\it Case II: $j\in \{3,5\}$.} By Proposition \ref{prop:splitsing},
$$\cQ(d,a)=\{p:~p\equiv a~({\rm mod~}d),~\ord_p(2)=p-1\}.$$
By Theorem \ref{lenstrarekenkundigerij}, under GRH, this
set has density $\delta(2,1,d,a).$ For arbitrary
$g,d,a$ the third author determined the rational number
$\delta(g,1,d,a)/A,$ see \cite[Theorem 1]{Mor1} or 
\cite[Theorem 1.2]{Mor2}. On applying his result, the proof of this subcase is
then completed.

{\it Case III: $j=7$.} By Proposition \ref{prop:splitsing},
$$\cQ(d,a)=\{p:~p\equiv a~({\rm mod~}d),~\ord_p(2)=(p-1)/2\}.$$
For simplicity we write $\delta=\delta(\cQ(d,a)).$
By Theorem \ref{lenstrarekenkundigerij} we have
\begin{equation}
    \label{startertje}
\delta=\delta(2,2,d,a)=
\sum_{n=1}^{\infty}
\frac{\mu(n)c_a(2n)}{[\mathbb Q(\zeta_d,\zeta_{2n},2^{1/2n})
:\mathbb Q]}.
\end{equation}
In case $n$ is even, then trivially
$\mathbb Q(\sqrt{-1})\subseteq \mathbb Q(\zeta_d)\cap 
\mathbb Q(\zeta_{2n},2^{1/2n}).$ 
As $\sigma_a$ acts by conjugation
on $\mathbb Q(\sqrt{-1}),$ cf. \cite[Lemma 2.2]{Mor2}, and
not as the identity, it follows that $c_a(2n)=0.$

Next assume that $n$ is odd and square-free.
Then by \cite[Lemma 2.4]{Mor2} we infer that
$$\mathbb Q(\zeta_d)\cap \mathbb Q(\zeta_{2n},2^{1/2n})
=\mathbb Q(\zeta_{(d,n)},\sqrt{2}).$$
Since 
$$
\sigma_a \big |_{\mathbb Q(\sqrt{2})}=\text{id.} \qquad \text{and} \qquad 
\sigma_a \big |_{\mathbb Q(\zeta_{(d,2n)})}
\begin{cases}
= \text{id.} & \text{if~}a\equiv 1~({\rm mod~}(d,2n));\\
\ne \text{id.} & \text{otherwise},
\end{cases}
$$
we conclude that
$$
c_a(2n) = 
\begin{cases}
 1 & \text{if~}a\equiv 1~({\rm mod~}(d,2n));\\
0 & \text{otherwise},
\end{cases}
$$
with `id.'\,a shorthand for identity.
Note that the assumptions on $a,d$ and $n$ imply that 
$a\equiv 1~({\rm mod~}(d,2n))$ iff
$a\equiv 1~({\rm mod~}2(d,n))$ iff
$a\equiv 1~({\rm mod~}(d,n)).$
It follows that \eqref{startertje} simplifies to
$$
\delta=
\sum_{\substack{2\nmid n\\ a\equiv 1~({\rm mod~}(d,n))}}
\frac{\mu(n)}{[\mathbb Q(\zeta_d,\zeta_{2n},2^{1/2n})
:\mathbb Q]}.
$$
When $n$ is odd and square-free, using \cite[Lemma 2.3]{Mor2} we obtain  
$$
[\mathbb Q(\zeta_d,\zeta_{2n},2^{1/2n})
:\mathbb Q]=[\mathbb Q(\zeta_{\text{lcm}(d,2n)},2^{1/2n})
:\mathbb Q]=
n\varphi(\text{lcm}(d,2n))=n\varphi(\text{lcm}(d,n)).$$
We thus get
$$
\varphi(d)\delta=\sum_{\substack{2\nmid n\\ a\equiv 1~({\rm mod~}(d,n))}}
\frac{\mu(n)\varphi(d)}{n\varphi(\text{lcm}(d,n))}.
$$
Put 
$$w(n)=\frac{n\varphi(\text{lcm}(d,n))}{\varphi(d)}.$$
In this notation
we obtain
$$
\delta=\frac{1}{\varphi(d)}
\sum_{\substack{2\nmid n\\ a\equiv 1~({\rm mod~}(d,n))}}
\frac{\mu(n)}{w(n)},
$$
where the argument in the sum is multiplicative
in $n.$
Using \cite[Lemma 3.1]{Mor2} and the notation used there and
in \cite[Theorem 1.2]{Mor2}, 
we find

\begin{equation*}
\begin{split}
\varphi(d)\delta & = S(1)-S_2(1)=2S(1)=2A(a,d,1)\\
&=2A\prod_{p|(a-1,d)}(1-\frac{1}{p})\prod_{p|d}
\left(1+\frac{1}{p^2-p-1}\right)=\delta(d,a),
\end{split}
\end{equation*}
as was to be proved. \qed

\section{Outlook}
A small improvement of 
the upper bound 
\eqref{eq:underGRH2} 
(and consequently the lower
bound \eqref{dainequality}) would be possible if 
instead of the estimate \eqref{deltagtfa}
a Vinogradov
type estimate for $\cP(g,t,d,a)(x)$ could be established, say
\begin{equation} 
\label{vinogradovtypeadvanced}
  \cP(g,t,d,a)(x)  \le \delta(g,t,d,a)\frac{x}{\log x} + O_{g,t,d}\left(\frac{x(\log\log x)^2}{\log^{5/4} x} \right). 
\end{equation} 
Vinogradov \cite{Vino} established the above result in
case $a=d=t=1$.    
Establishing \eqref{vinogradovtypeadvanced} seems technically quite involved. Recent work
by Pierce et al.~\cite{effectChebo} 
offers perhaps some hope that one can even improve
on the error term in \eqref{vinogradovtypeadvanced}.

\section{Some numerical experiments}  \label{sec:data}

In this section, using the Bernoulli numbers modulo $p$ function developed by David Harvey in  Sage~\cite{Sage} 
(see \cite{BH,HHO,Harvey10} for more details and improvements) and the euler{\_}number function in Sage, 
 we provide numerical evidence for the truth of Conjectures~\ref{conj:Siegel2}, \ref{conj:E-irre2}, \ref{con:global} and \ref{con:local} 
and also for \eqref{eq:underGRH} in Proposition~\ref{prop:main}. 

The Bernoulli numbers modulo $p$ function returns the values of $B_0, B_2, \ldots, B_{p-3}$ modulo $p$, 
and so by checking whether there is a zero value we can determine whether $p$ is B-irregular. 
For checking the E-irregularity, we use the euler{\_}number function in Sage to compute and store Euler numbers 
and then use the definition of E-irregularity. 
It would be a separate project to test large E-irregular primes, 
cf.\,\cite{BH,HHO,Harvey10}. 

In the tables, we only record the first six digits of the decimal parts. 

In Tables~\ref{tab:B-irre} and 
~\ref{tab:E-irre} the ratio $\cP_B(d,a)(x)/\pi(x;d,a),$  
respectively $\cP_E(d,a)(x)/\pi(x;d,a)$ is recorded for 
$x=10^5$ in the column `experimental' for
various choices of $d$ and $a$, and in the column `theoretical'  
the limit value  
predicted by Conjecture~\ref{conj:Siegel2} is given.

\begin{table}   
\centering
\caption{The ratio $\cP_B(d,a)(x)/\pi(x;d,a)$ for $x=10^5$}
\label{tab:B-irre}
\begin{tabular}{|c|c|c|c|}
\hline
$d$ & $a$    & experimental & theoretical  \\ \hline \hline  
     
3 & 2 & 0.394424  &  \\ \cline{1-2}

4 & 1 & 0.388877  &  \\ \cline{1-2}

5 & 4 & 0.397071  &   \\ \cline{1-2}

7 & 4 &  0.391005 & 0.393469  \\ \cline{1-2}

9 & 8 & 0.387742  &  \\ \cline{1-2}

12 & 5 & 0.390203  &  \\ \cline{1-2}

15 & 13 & 0.389858  &  \\ \cline{1-2}

20 & 13 &  0.385191 &  \\ \hline

\end{tabular}
\end{table}

\begin{table}   
\centering
\caption{The ratio $\cP_E(d,a)(x)/\pi(x;d,a)$ for $x=10^5$}
\label{tab:E-irre}
\begin{tabular}{|c|c|c|c|}
\hline
$d$ & $a$    & experimental & theoretical  \\ \hline \hline  
     
3 & 2 &  0.395672 &  \\ \cline{1-2}

4 & 1 & 0.388040  &  \\ \cline{1-2}

5 & 4 & 0.397071  &   \\ \cline{1-2}

7 & 4 &  0.393504 & 0.393469  \\ \cline{1-2}

9 & 8 & 0.391494  &  \\ \cline{1-2}

12 & 5 & 0.388127  &  \\ \cline{1-2}

15 & 13 & 0.399002  &  \\ \cline{1-2}

20 & 13 & 0.385191 &  \\ \hline

\end{tabular}
\end{table}

Table~\ref{tab:G-irre1} gives the ratio 
$\cP_G(x)/\pi(x)$ for various values of $x,$ and the value in the column `theoretical' is the limit value  
$1-3A/(2\sqrt{e})$ predicted by Conjecture~\ref{con:global}.

\begin{table}   
\centering
\caption{The ratio $\cP_G(x)/\pi(x)$}
\label{tab:G-irre1}
\begin{tabular}{|c|c|c|}  
\hline
$x$ & experimental & theoretical  \\ \hline \hline

$10^5$ & 0.661592  & \\ \cline{1-2}

$10^6$ & 0.659558  & \\ \cline{1-2}

$2\cdot 10^6$  & 0.660860  & 0.659776 \\ \cline{1-2}

$3\cdot 10^6$  & 0.661413  & \\ \cline{1-2}

$4\cdot 10^6$  & 0.660683  & \\ \cline{1-2}

$5\cdot 10^6$ &  0.660864  & \\ \hline

\end{tabular}
\end{table}

Table~\ref{tab:G-irre2} gives the ratio $\cP_G(d,a)(x)/\pi(x;d,a)$ for 
$x=5 \cdot 10^6$ in the column `experimental' for
various choices of $d$ and $a$, and the corresponding
limit values $1-c(d,a)R(d,a)A/\sqrt{e}$ 
predicted by Conjecture~\ref{con:local} 
are in the column `theoretical'.

\begin{table}   
\centering
\caption{The ratio $\cP_G(d,a)(x)/\pi(x;d,a)$ for $x=5 \cdot 10^6$}
\label{tab:G-irre2}
\begin{tabular}{|c|c|c|c|}
\hline
$d$ & $a$ & experimental & theoretical  \\ \hline \hline

3 & 1 & 0.728296  & 0.727821 \\ \hline

5 & 2 & 0.643010  & 0.641870 \\ \hline

4 & 1 & 0.771512  & 0.773184 \\ \hline

20 & 9 &  0.757311 & 0.761246 \\ \hline

12 & 11 & 0.460584  & 0.455642 \\ \hline

20 & 19 & 0.528567  & 0.522493 \\ \hline

8 & 7 & 0.550086  & 0.546368 \\ \hline

24 & 13 & 0.634191  & 0.637094 \\ \hline
 
\end{tabular}
\end{table}

Finally, Table~\ref{tab:distrioverAP} gives the ratio
${\mathcal Q}(d,a)(x) / \pi(x;d,a)$
for  $x= 5 \cdot 10^6$ in the column `experimental' for
various choices of $d$ and $a$.  
In the column `theoretical', there is
the corresponding relative 
density $\delta(d,a)$ 
predicted in \eqref{eq:underGRH} and known to be true under GRH. 

\begin{table}   
\centering
\caption{The ratio $\cQ(d,a)(x)/\pi(x;d,a)$ for $x=5 \cdot 10^6$}
\label{tab:distrioverAP}
\begin{tabular}{|c|c|c|c|}
\hline
$d$ & $a$ & experimental & theoretical  \\ \hline \hline

3 & 1 & 0.449049  & 0.448746 \\ \hline

5 & 2 & 0.589614  &  0.590456  \\ \hline

4 & 1 & 0.374664  & 0.373955 \\ \hline

20 & 9 & 0.395498  & 0.393637 \\ \hline

12 & 11 & 0.898284  & 0.897493 \\ \hline

20 & 19 & 0.789316  & 0.787275 \\ \hline

8 & 7 & 0.747300  & 0.747911 \\ \hline

24 & 13 & 0.598815  & 0.598329 \\ \hline
\end{tabular}
\end{table}

In view of the definition of
the constant $c(d,a)$ in Proposition~\ref{prop:main}, 
there are four cases excluding the case $8 | d$ and $a \equiv 1 ~({\rm mod}~8)$ (which gives $c(d,a)=0$). 
For each of these four cases there are two instances in Tables~\ref{tab:G-irre2} and \ref{tab:distrioverAP}.

\section*{Acknowledgement}
The authors would like to thank the referee for careful reading and valuable comments.  
This work was supported by the National Natural Science Foundation  of China, Grant 
No.\,11501212. 
The research of Min-Soo Kim and Min 
Sha was also supported by the Kyungnam University Foundation Grant, 2017, 
respectively a Macquarie University Research Fellowship. 
The authors thank Bernd Kellner for pointing out a link with the Genocchi numbers
and suggesting the references \cite{Ern2, HP, Holden}, 
and Peter Stevenhagen for very helpful feedback. 
They also thank Alexandru Ciolan for proofreading earlier versions.

\end{document}